\newtheorem{theorem}{Theorem}
\theoremstyle{plain}
\newtheorem{corollary}{Corollary}
\newtheorem{definition}{Definition}
\newtheorem{lemma}{Lemma}
\numberwithin{equation}{section}
\begin{document}

\begin{center}
{\LARGE \bf
Incomplete $q$-Chebyshev Polynomials}\\
\vskip 1cm

{\large Elif ERCAN$^{1}$, Mirac CETIN FIRENGIZ$^{2}$ and Naim TUGLU$^{1}$} \\
\vskip 0.5cm

$^{1}$Gazi University, Faculty of Art and Science, Department of Mathematics,\smallskip \\
Teknikokullar 06500, Ankara-Turkey \smallskip \\
{\tt  elifercan06@gmail.com } \ \ \ \ \ {\tt naimtuglu@gazi.edu.tr}\\
\vskip 0.5cm

$^{2}$Ba\c{s}kent University, Department of Mathematics Education, \smallskip\\  Ankara-Turkey \smallskip \\
{\tt mcetin@baskent.edu.tr} 
\vskip 0.8cm
\end{center}


\begin{abstract}
	In this paper, we get the generating functions of $q$-Chebyshev polynomials using  $\eta _{z}$ operator, which is $\eta _{z}\left( f(z)\right)=f(qz)$ for any given function $f\left( z\right) $. Also considering explicit formulas of $q$-Chebyshev polynomials, we give new generalizations of $q$-Chebyshev polynomials called incomplete $q$-Chebyshev polynomials of the first and second kind. We obtain recurrence relations and several properties of these polynomials. We show that there are connections between incomplete $q$-Chebyshev polynomials and the some well-known polynomials.\smallskip \\
\noindent \textbf{Keywords:} $q$-Chebyshev polynomials, $q$-Fibonacci polynomials, Incomplete polynomials, Fibonacci number  \smallskip \\
\textbf{Mathematics Subject Classiffication:} 11B39, 05A30
\end{abstract}

\thispagestyle{empty}

\section{Introduction}

Chebyshev polynomials are of great importance in many area of mathematics, particularly approximation theory. The Chebyshev polynomials of the second kind can be expressed by the formula
\begin{equation*}
U_{n}(x)=2xU_{n-1}(x)-U_{n-2}(x)\ \ \ \ n\geq 2,
\end{equation*}
with initial conditions $U_{0}=1,\ U_{1}(x)=2x$ and the Chebyshev polynomials of the first kind can be defined as
\begin{equation*}
T_{n}(x)=2xT_{n-1}(x)-T_{n-2}(x)\ \ \ \ n\geq 2,
\end{equation*}
with initial conditions $T_{0}(x)=1,\;T_{1}(x)=x$ in \cite{12}.

The well-known Fibonacci and Lucas sequences are defined by the recurrence relations
\begin{equation*}
F_{n+1}=F_{n}+F_{n-1}\ \ \ \ \ n\geq 1
\end{equation*}%
\begin{equation*}
L_{n+1}=L_{n}+L_{n-1}\ \ \ \ \ n\geq 1
\end{equation*}%
with initial conditions $F_{0}=0,\ F_{1}=1$ and $L_{0}=2,\ L_{1}=1$, respectively. In \cite{11}, Filipponi introduced a generalization of the Fibonacci numbers. Accordingly, the incomplete Fibonacci and Lucas numbers are determined by:
\begin{equation}
F_{n}(k)=\sum_{j=0}^{k}\binom{n-1-j}{j},\ \ \ \ \ \ \ \ \ \ \ \ \ 0\leq k\leq \left\lfloor \tfrac{n-1}{2}\right\rfloor  \label{1.1}
\end{equation}
and
\begin{equation}
L_{n}(k)=\sum_{j=0}^{k}\frac{n}{n-j}\binom{n-j}{j},  \ \ \ \ \ \ \ \ \ \ \ \ \ \ 0\leq k\leq \left\lfloor \tfrac{n}{2}\right\rfloor  \label{1.2}
\end{equation}
where $n\in \mathbb{N}$. Note that $F_{n}(\left\lfloor \tfrac{n-1}{2}\right\rfloor )=F_{n}$ and $L_{n}(\left\lfloor \tfrac{n}{2}\right\rfloor )=L_{n}$. 
In \cite{13}, the generating functions of incomplete Fibonacci and Lucas polynomials were given by Pint\'{e}r and Srivastava. For more results on the incomplete Fibonacci numbers, the readers may refer to \cite{9,10,14,15,16}.

We need $q$-integer and $q$-binomial coefficient. There are several equivalent definition and notation for the $q$-binomial coefficients \cite{1,18,19,20}. Let $q\in \mathbb{C}$ with $0<\left\vert q\right\vert <1$ \ as an indeterminate and nonnegative integer $n$. The $q$-integer of the number $n$ is defined by
\begin{equation*}
\left[ n\right] _{q}:=\dfrac{1-q^{n}}{1-q}, 
\end{equation*}
with $\left[ 0\right] _{q}=0$. The $q$-factorial is defined by
\begin{equation*}
\left[ n\right]!:=\left\{
\begin{array}{cl}
\left[n\right] _{q}\,\left[ n-1\right] _{q}\dots \left[ 1	\right] _{q}, & \text{\ if \ }n=1,2,\dots \medskip \\
1, & \text{\ if\ \ }n=0.
\end{array}
\right.
\end{equation*}
The Gaussian or $q-$binomial coefficients are defined by
\begin{equation*}
\biggl[\displaystyle{n\atop k}\biggr] _{q}:=\frac{\left[ n\right] _{q} !}{\left[ n-k\right] _{q}!\,\left[k\right] _{q}!}, \ \ \ \ \  0\leq k\leq n
\end{equation*}
or
\begin{equation*}
\biggl[\displaystyle{n\atop k}\biggr] _{q}:=\frac{(q;q)_{n}}{(q;q)_{n-k}(q;q)_{k}},\ \ \ \ \
0\leq k\leq n
\end{equation*}   		
with ${n \brack k}=0$ \ for $n<k$, where $(x;q)_{n}$ is the $q-$shifted factorial, that is, $(x;q)_{0}=1$,
\begin{equation*}
\left( x;q\right) _{n}=\prod\limits_{i=0}^{n-1}\left( 1-q^{i}x\right) .
\end{equation*}
The $q$-binomial coefficient satisfies the recurrence relations and properties:
\begin{eqnarray}
\biggl[\displaystyle{n+1\atop k}\biggr] _{q}&=& q^{k} \biggl[\displaystyle{n\atop k}\biggr] _{q}+\biggl[\displaystyle{n\atop k-1}\biggr] _{q}  \label{1.2a} \\
\biggl[\displaystyle{n+1\atop k}\biggr] _{q}&=& \biggl[\displaystyle{n\atop k}\biggr] _{q}+ q^{n-k+1} \biggl[\displaystyle{n\atop k-1}\biggr] _{q}  \label{1.2b} \\
\frac{\left[ n\right] _{q} }{\left[n-k\right] _{q} } \biggl[\displaystyle{n-k\atop k}\biggr] _{q}&=& q^{k}\biggl[\displaystyle{n-k\atop k}\biggr] _{q}+\biggl[\displaystyle{n-k-1\atop k-1}\biggr] _{q} \label{1.2c} \\
q^{k}\frac{\left[ n\right] _{q} }{\left[n-k\right] _{q} } \biggl[\displaystyle{n-k\atop k}\biggr] _{q}&=& q^{k}\biggl[\displaystyle{n-k\atop k}\biggr] _{q}+q^{n}\biggl[\displaystyle{n-k-1\atop k-1}\biggr] _{q}. \label{1.2d}
\end{eqnarray}

The q-analogues of Fibonacci polynomials are studied by Carlitz in \cite{2}. Also, a new $q$-analogue of the Fibonacci polynomials is defined by Cigler and obtain some of its properties in \cite{5}. In \cite{6}, Pan study some arithmetic properties of the q-Fibonacci numbers and the q-Pell numbers. Cigler defined $q$-analogues of the Chebyshev polynomials and study properties of these polynomials in \cite{3,4}.

In this paper, we derive generating functions of $q$-Chebyshev polynomials of first and second kind. More generally, we define incomplete $q$-Chebyshev polynomials of first and second kind. We get recurrence relations and several properties of these polynomials. We show that there are the relationships between $q$-Chebyshev polynomials and incomplete $q$-Chebyshev polynomials.

\section{$q$-Chebyshev Polynomials}

\begin{definition}
	The $q$-Chebyshev polynomials of the second kind are defined by
	\begin{equation}
	\mathcal{U}_{n}(x,s,q)=(1+q^{n})x\: \mathcal{U}_{n-1}(x,s,q)+q^{n-1}s\: \mathcal{U}_{n-2}(x,s,q) \ \  \ \ \ \ n\geq 2, \label{1.3}
	\end{equation}%
	with initial conditions $\mathcal{U}_{0}(x,s,q)=1$ and $\mathcal{U}_{1}(x,s,q)=(1+q)x$ in \cite{3}. 
\end{definition}

\begin{definition}
	The $q$-Chebyshev polynomials of the first kind are defined by
	\begin{equation}
	\mathcal{T}_{n}(x,s,q)=(1+q^{n-1})x\mathcal{T}_{n-1}(x,s,q)+q^{n-1}s\mathcal{T}_{n-2}(x,s,q) \ \ \ \ \ \ n\geq 2,  \label{1.4}
	\end{equation}%
	with initial conditions $\mathcal{T}_{0}(x,s,q)=1$ and $\mathcal{T}_{1}(x,s,q)=x$ in \cite{3}. 
\end{definition}
It is clear that $\:\mathcal{U}_{n}(x,-1,1)=U_{n}(x)$ and $\mathcal{T}_{n}(x,-1,1)=T_{n}(x)$. The $q$-Chebyshev polynomials of the second kind is determined as the
combinatorial sum
\begin{equation}
\:\mathcal{U}_{n}(x,s,q)=\sum\limits_{j=0}^{\lfloor \frac{n}{2}\rfloor }q^{j^{2}}\biggl[\displaystyle{n-j \atop j }\biggr]_{q}\frac{(-q;q)_{n-j}}{(-q;q)_{j}}\:s^{j}x^{n-2j},\ \ \ \ \ \ n\geq 0  \label{1.5}
\end{equation}
and the $q$-Chebyshev polynomials of the first kind is determined as
\begin{equation}
\mathcal{T}_{n}(x,s,q)=\sum\limits_{j=0}^{\lfloor \frac{n}{2}\rfloor }q^{j^{2}}\frac{\left[ n\right] _{q}}{\left[ n-j\right] _{q}}\biggl[\displaystyle{n-j \atop j }\biggr]_{q}\frac{(-q;q)_{n-j-1}}{(-q;q)_{j}}\:s^{j}x^{n-2j}, \ \ \ \ \ n>0  \label{1.6}
\end{equation}%
with $\mathcal{T}_{0}(x,s,q)=1$ in \cite{3}.

	\begin{table}[h!]
		\caption{Some special cases of the $q$-Chebyshev polynomials of the second kind}
		
		\begin{equation*}
			\begin{tabular}{|l|c|l|l|l|}
				\hline
				$x$ & $s$ & $q$ & $\:\mathcal{U}_{n}(x,s,q)$ & $q-$Chebyshev polynomials of the second kind
				\\ \hline\hline
				$x$ & $-1$ & $1$ & $U_{n}(x)$ & \. Chebyshev polynomials of the second kind 
				\\ \hline
				$\frac{x}{2}$ & $1$ & $1$ & $F_{n+1}(x)$ & Fibonacci polynomials  \\ \hline
				$\frac{1}{2}$ & $1$ & $1$ & $F_{n+1}$ & Fibonacci numbers  \\ \hline
				$x$ & $1$ & $1$ & $P_{n+1}(x)$ & Pell polynomials  \\ \hline
				$1$ & $1$ & $1$ & $P_{n+1}$ & Pell numbers  \\ \hline
				$\frac{1}{2}$ & $2y$ & $1$ & $J_{n+1}(y)$ & Jacobsthal polynomials  \\ \hline
				$\frac{1}{2}$ & $2$ & $1$ & $J_{n+1}$ & Jacobsthal numbers  \\ \hline
			\end{tabular}%
		\end{equation*}
	\end{table}
	
	
	\begin{table}[h!]
		\caption{Some special cases of the $q$-Chebyshev polynomials of the first kind}
		
		\begin{equation*}
			\begin{tabular}{|l|c|l|l|l|}
				\hline
				$x$ & $s$ & $q$ & $\mathcal{T}_{n}(x,s,q)$ & $q-$Chebshev polynomials of the first kind
				\\ \hline\hline
				$x$ & $-1$ & $1$ &  $T_{n}(x)$ & Chebyshev polynomials of the first kind
				\\ \hline
				$\frac{x}{2}$ & $1$ & $1$ & $\frac{1}{2}L_{n}(x)$ & Lucas polynomials \\ \hline
				$\frac{1}{2}$ & $1$ & $1$ & $\frac{1}{2}~L_{n}$ & Lucas numbers \\ \hline
				$x$ & $1$ & $1$ & $\frac{1}{2}~Q_{n}(x)$ & Pell-Lucas polynomials \\ \hline
				$1$ & $1$ & $1$ &  $\frac{1}{2}~Q_{n}$ & Pell-Lucas numbers \\ \hline
				$\frac{1}{2}$ & $2y$ & $1$ &  $\frac{1}{2}~j_{n}(y)$ & Jacobsthal-Lucas polynomials \\ \hline
				$\frac{1}{2}$ & $2$ & $1$ & $\frac{1}{2}~j_{n}$ & Jacobsthal-Lucas numbers \\ \hline
			\end{tabular}%
		\end{equation*}
	\end{table}
	


\subsection{Generating Functions of $q$-Chebyshev Polynomials}

Andrews \cite{17} obtain the generating function for Schur's polynomials, which is defined by $S_{n}(q)=S_{n-1}(q)-q^{n-2}S_{n-2}(q)$ for $n>1$ with intial conditions $S_{0}(q)=0$ and $S_{1}(q)=1$. The generating funtions of $S_{n}(q)$ is
\begin{equation}
\sum_{n=0}^{\infty }S_{n}(q)x^{n}=\frac{x}{1-x-x^{2}\,\eta _{z}} 
\end{equation}
where is $\eta _{z}$ is an operator on functions of $z$ defined by $\eta _{z}\left( f(z)\right)=f(qz)$ in \cite{17}.
We give the following theorems for generating functions of $q$-Chebyshev polynomials of the second and first kind with an operator $\eta _{z}$.

\begin{theorem}
	The generating function of $q$-Chebyshev polynomials of the second kind is
	\begin{equation}
	G(z)=\frac{1}{1-zx-(xqz+sqz^{2})\,\eta _{z}}.  \label{2.1}
	\end{equation}
\end{theorem}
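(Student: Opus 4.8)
The plan is to read the right-hand side of \eqref{2.1} as the unique formal power series $G(z)=\sum_{n\ge 0}c_n(x,s,q)\,z^n$ (with polynomial coefficients in $x,s,q$) determined by the operator identity $\bigl(1-zx-(xqz+sqz^2)\eta_z\bigr)G(z)=1$, and then to identify $c_n$ with $\mathcal{U}_n(x,s,q)$ by matching the defining recurrence \eqref{1.3}. Writing $A:=zx+(xqz+sqz^2)\eta_z$, using $(1-A)^{-1}=1+A(1-A)^{-1}$ applied to the constant function $1$, and recalling that $\eta_z$ is linear with $\eta_z(z^n)=q^n z^n$, this operator identity is equivalent to the $q$-functional equation
\[
G(z)=1+zx\,G(z)+(xqz+sqz^2)\,G(qz).
\]

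First I would substitute $G(z)=\sum_{n\ge 0}c_nz^n$, so that $G(qz)=\sum_{n\ge 0}c_nq^nz^n$, into this functional equation and compare coefficients of $z^n$ on both sides. The term $zx\,G(z)$ contributes $x\,c_{n-1}$ to the coefficient of $z^n$ for $n\ge 1$; the term $xqz\,G(qz)$ contributes $xq^n c_{n-1}$ for $n\ge 1$; the term $sqz^2\,G(qz)$ contributes $sq^{n-1}c_{n-2}$ for $n\ge 2$; and the constant $1$ affects only the coefficient of $z^0$. Reading off coefficients yields $c_0=1$, then $c_1=x c_0+xq c_0=(1+q)x$, and for $n\ge 2$
\[
c_n=(1+q^n)x\,c_{n-1}+q^{n-1}s\,c_{n-2}.
\]
These are exactly the initial conditions $\mathcal{U}_0=1$, $\mathcal{U}_1=(1+q)x$ and the recurrence \eqref{1.3}, so $c_n=\mathcal{U}_n(x,s,q)$ for all $n\ge 0$, which proves \eqref{2.1}.

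I do not expect a genuine obstacle: the computation is bookkeeping. The single point that requires care is the interpretation of the operator symbol, namely that $\eta_z$ acts on $G(z)$ \emph{before} multiplication by the polynomial factor $xqz+sqz^2$ (equivalently, one uses the non-commutation rule $\eta_z\, z=qz\,\eta_z$ when expanding $\sum_{k\ge 0}A^k$), and that $(1-A)^{-1}$ is understood as acting on $1$; with the opposite convention one would pick up spurious powers of $q$ and the recurrence would fail to close against \eqref{1.3}. An alternative route, if one wishes to avoid the functional equation, is to expand $\sum_{k\ge 0}\bigl(zx+(xqz+sqz^2)\eta_z\bigr)^k(1)$ as an explicit multiple sum and identify the coefficient of $z^n$ with the combinatorial formula \eqref{1.5}; this is correct but noticeably more cumbersome than the recurrence check, so the verification via the $q$-functional equation is the natural proof.
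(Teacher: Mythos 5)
Your proposal is correct and is essentially the paper's own argument: the paper sets $G(z)=\sum_{n\ge 0}\mathcal{U}_n z^n$ and verifies $\left(1-xz-(xqz+sqz^2)\eta_z\right)G(z)=1$ by shifting indices and invoking the recurrence \eqref{1.3}, which is exactly your coefficient comparison read in the opposite direction. Your explicit remark about the operator convention ($\eta_z$ acting before multiplication by the polynomial factor) is a useful clarification but does not change the substance.
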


\begin{proof}
	Let
	\begin{equation*}
	G(z)=\sum_{n=0}^{\infty }\:\mathcal{U}_{n}z^{n}.
	\end{equation*}
	Now we show that
	\begin{equation*}
	\left( 1-xz-\left( xqz+sqz^{2}\right) \eta _{z}\right) G(z)=1.
	\end{equation*}
	Thus we write
	\begin{align*}
		\left( 1-xz-\left( xqz+sqz^{2}\right) \eta _{z}\right) G(z) =& \ \sum_{n=0}^{\infty }\:\mathcal{U}_{n}z^{n}-x\sum_{n=0}^{\infty}\:\mathcal{U}_{n}z^{n+1}-x\sum_{n=0}^{\infty}\:\mathcal{U}_{n}q^{n+1}z^{n+1}-s\sum_{n=0}^{\infty }\:\mathcal{U}_{n}q^{n+1}z^{n+2}\\
		=& \ \sum_{n=0}^{\infty }\:\mathcal{U}_{n}z^{n}-x\sum_{n=1}^{\infty }\left(1+q^{n}\right) \:\mathcal{U}_{n-1}z^{n}-s\sum_{n=2}^{\infty}\:\mathcal{U}_{n-2}q^{n-1}z^{n} \\
		=& \ \mathcal{U}_{0}+\:\mathcal{U}_{1}z-x\left( 1+q\right) \:\mathcal{U}_{0}z +\sum_{n=2}^{\infty }\left( \:\mathcal{U}_{n}-x\left( 1+q^{n}\right)\:\mathcal{U}_{n-1}-s\:\mathcal{U}_{n-2}q^{n-1}\right) z^{n}.
	\end{align*}
	Therefore we have from Eq. (\ref{1.3})
	\begin{equation*}
	\left( 1-xz-\left( xqz+sqz^{2}\right) \eta _{z}\right) G(z) =\:\mathcal{U}_{0}+\:\mathcal{U}_{1}z-x\left( 1+q\right) \:\mathcal{U}_{0}z.
	\end{equation*}
	From $\:\mathcal{U}_{0}=1$ ve $\:\mathcal{U}_{1}=(1+q)x$, we get
	\begin{equation*}
	\left( 1-xz-\left( xqz+sqz^{2}\right) \eta _{z}\right) G(z)=1.
	\end{equation*}
\end{proof}

\begin{theorem}
	The generating function of $q$-Chebyshev polynomials of the first kind is
	\begin{equation}
	S(z)=\frac{1-xz}{1-xz-(xz-sqz^{2})\,\eta _{z}}.  \label{2.2}
	\end{equation}
\end{theorem}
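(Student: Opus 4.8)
The plan is to follow the same pattern as the proof of the preceding theorem (the generating function of the second kind). Set $S(z)=\sum_{n=0}^{\infty}\mathcal{T}_{n}(x,s,q)\,z^{n}$ and suppress the arguments $x,s,q$. It suffices to establish the operator identity
\[
\bigl(1-xz-(xz-sqz^{2})\,\eta_{z}\bigr)S(z)=1-xz,
\]
from which (2.2) follows by formally dividing by the operator polynomial. Since $\eta_{z}$ sends a power series in $z$ to the same series in $qz$, I would first compute $\eta_{z}S(z)=\sum_{n\geq0}\mathcal{T}_{n}q^{n}z^{n}$, then multiply by $xz-sqz^{2}$, and assemble the left-hand side as a single combination of four explicit power series (the identity term, the $-xzS(z)$ term, and the two pieces coming from $-(xz-sqz^{2})\eta_{z}S(z)$).

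The next step is pure bookkeeping: shift the summation index in each series so that every term carries the power $z^{n}$. The identity contributes $\sum_{n\ge0}\mathcal{T}_{n}z^{n}$; the term $-xzS(z)$ contributes $-x\sum_{n\ge1}\mathcal{T}_{n-1}z^{n}$; the term $-xz\,\eta_{z}S(z)$ contributes $-x\sum_{n\ge1}q^{n-1}\mathcal{T}_{n-1}z^{n}$; and the quadratic piece contributes a $z^{2}$-shifted series proportional to $q^{n-1}s\,\mathcal{T}_{n-2}z^{n}$ for $n\ge2$ (its sign being fixed by the requirement that it reproduce the $q^{n-1}s$-term of (1.4), which is what pins down the sign on the $sqz^{2}$ in the operator). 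Combining the two middle series gives $-x\sum_{n\ge1}(1+q^{n-1})\mathcal{T}_{n-1}z^{n}$, so that for $n\ge2$ the coefficient of $z^{n}$ equals $\mathcal{T}_{n}-(1+q^{n-1})x\,\mathcal{T}_{n-1}-q^{n-1}s\,\mathcal{T}_{n-2}$, which vanishes by the recurrence (1.4). Only the $n=0$ and $n=1$ terms survive: the constant term is $\mathcal{T}_{0}=1$, and the coefficient of $z$ is $\mathcal{T}_{1}-(1+q^{0})x\,\mathcal{T}_{0}=\mathcal{T}_{1}-2x\,\mathcal{T}_{0}=x-2x=-x$, using $\mathcal{T}_{0}=1$ and $\mathcal{T}_{1}=x$. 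Hence the left-hand side collapses to $1-xz$, as required.

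I do not expect a genuine obstacle: this is a routine generating-function verification, entirely parallel to the second-kind case. The one point needing care is that (1.4) holds only for $n\ge2$, so the coefficients of $z^{0}$ and $z^{1}$ must be extracted by hand — and it is exactly the mismatch at $n=1$, where $\mathcal{T}_{1}=x$ rather than the value $2x$ that the recurrence extrapolated to $n=1$ would give, that produces the nontrivial numerator $1-xz$; by contrast the second-kind generating function has numerator $1$ because $\mathcal{U}_{1}=(1+q)x$ is already consistent with (1.3). Beyond that, the only thing to watch is the careful tracking of the powers of $q$ (and the sign on the quadratic term) as $\eta_{z}$ passes through the multiplications by $z$ and $z^{2}$.
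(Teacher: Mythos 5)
Your proposal is correct and follows essentially the same route as the paper: apply the operator to $S(z)=\sum_{n\ge 0}\mathcal{T}_n z^n$, reindex so every term carries $z^n$, cancel the coefficients for $n\ge 2$ via the recurrence (1.4), and read off the surviving terms $\mathcal{T}_0+(\mathcal{T}_1-2x\mathcal{T}_0)z=1-xz$. Your aside about the sign of the quadratic term is well taken — for the $+q^{n-1}s\,\mathcal{T}_{n-2}$ term of (1.4) to cancel, the operator must actually be $1-xz-(xz+sqz^{2})\,\eta_{z}$ rather than the $-sqz^{2}$ printed in (2.2), and the paper's own expansion (which writes $-s\sum_{n\ge 2}\mathcal{T}_{n-2}q^{n-1}z^{n}$) silently uses that corrected sign.
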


\begin{proof}
	Let $S(z)=\sum_{n=0}^{\infty }\mathcal{T}_{n}z^{n}.$ Then
	\begin{align*}
		\left( 1-xz-\left( xz-sqz^{2}\right) \eta _{z}\right) S(z) =& \ \sum_{n=0}^{\infty }\mathcal{T}_{n}z^{n}-x\sum_{n=1}^{\infty}\mathcal{T}_{n-1}z^{n}-x\sum_{n=1}^{\infty}\mathcal{T}_{n-1}\,q^{n-1}z^{n}-s\sum_{n=2}^{\infty }\mathcal{T}_{n-2}\,q^{n-1}z^{n}\\
		=& \ \sum_{n=0}^{\infty }\mathcal{T}_{n}\,z^{n}-x\sum_{n=1}^{\infty }\left(1+q^{n-1}\right) \mathcal{T}_{n-1}\,z^{n}-s\sum_{n=2}^{\infty}\mathcal{T}_{n-2}\,q^{n-1}z^{n} \\
		=& \ \mathcal{T}_{0}+\mathcal{T}_{1}z-2x \,\mathcal{T}_{0}\,z +\sum_{n=2}^{\infty }\left( \mathcal{T}_{n}-x\left( 1+q^{n-1}\right)\mathcal{T}_{n-1}-sq^{n-1}\,\mathcal{T}_{n-2}\right) z^{n}
	\end{align*}
	and we get using Eq. (\ref{1.4})
	\begin{equation*}
	\left( 1-xz-\left( xz-sqz^{2}\right) \eta _{z}\right) S(z)=\mathcal{T}_{0}+\mathcal{T}_{1}\,z-2\mathcal{T}_{0}\,xz.
	\end{equation*}
	From $\mathcal{T}_{0}=1$ ve $\mathcal{T}_{1}=x$, we conclude that
	\begin{equation*}
	S(z)-xzS(z)-xz\,\eta _{z}S(z)-sqz^{2}\,\eta _{z}S(z)=1-xz,
	\end{equation*}
	finally we obtain
	\begin{equation}
	S(z)=\frac{1-xz}{1-xz-(xz-sqz^{2})\,\eta _{z}}.
	\end{equation}
\end{proof}
\section{Incomplete $q$-Chebyshev Polynomials}
In this section, we define incomplete $q$-Chebyshev polynomials of the first and second kind. We give several properties for these polynomials.
\begin{definition}
	For $n$ is a nonnegative integer, the incomplete $q$-Chebyshev polynomials of the second kind are defined as
	\begin{equation}
	\:\mathcal{U}_{n}^{k}(x,s,q)=\sum\limits_{j=0}^{k}q^{j^{2}}\biggl[\displaystyle{n-j \atop j }\biggr]_{q}\frac{(-q;q)_{n-j}}{(-q;q)_{j}}\:s^{j}x^{n-2j}\ \ \ \ \ 0\leq k\leq\left\lfloor \tfrac{n}{2}\right\rfloor .  \label{3.1}
	\end{equation}
\end{definition}

When $k=\left\lfloor \frac{n}{2}\right\rfloor $ in (\ref{3.1}),\ $\:\mathcal{U}_{n}^{k}(x,s,q)=\:\mathcal{U}_{n}(x,s,q)$, we get the $q$-Chebyshev polynomials of the second kind in \cite{3,4}. Some special cases of the incomplete $q$-Chebyshev polynomials of the second kind are provided in Table 1. 

\begin{definition}
	For $n$ is a nonnegative integer, the incomplete $q$-Chebyshev polynomials of the first kind are defined by
	\begin{equation}
	\mathcal{T}_{n}^{k}(x,s,q)=\sum\limits_{j=0}^{k}q^{j^{2}}\frac{\left[ n\right] _{q}}{\left[ n-j\right] _{q}}\biggl[\displaystyle{n-j \atop j }\biggr]_{q}\frac{(-q;q)_{n-j-1}}{(-q;q)_{j}}\:s^{j}x^{n-2j} \ \ \ \ 0\leq k\leq \left\lfloor \tfrac{n}{2}\right\rfloor .  \label{3.2}
	\end{equation}
\end{definition}

Some special cases of the incomplete $q$-Chebyshev polynomials of the first kind are provided in Table 2.

\begin{theorem}
	The incomplete $q$-Chebyshev Polynomials of the second kind satisfy
	\begin{equation}
	\:\mathcal{U}_{n+2}^{k+1}=(1+q^{n+2})x\:\mathcal{U}_{n+1}^{k+1}+q^{n+1}s\:\mathcal{U}_{n}^{k}\label{3.3}
	\end{equation}%
	for $0\leq k\leq \frac{n-1}{2}$.
\end{theorem}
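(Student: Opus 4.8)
The plan is to argue directly from the explicit formula (\ref{3.1}), comparing the coefficients of the monomials $s^{j}x^{n+2-2j}$ on the two sides of (\ref{3.3}). Expanding the left-hand side via (\ref{3.1}) produces a sum over $0\le j\le k+1$; expanding $(1+q^{n+2})x\,\mathcal{U}_{n+1}^{k+1}$ produces a sum over $0\le j\le k+1$ of the same monomials; and expanding $q^{n+1}s\,\mathcal{U}_{n}^{k}$ produces a sum over $0\le j\le k$ in which every term carries an extra factor $s$, so that after the index shift $j\mapsto j-1$ it becomes a sum over $1\le j\le k+1$ of $s^{j}x^{n+2-2j}$. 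The one bookkeeping point in this step is to use $q^{n+1}q^{(j-1)^{2}}=q^{j^{2}}q^{\,n-2j+2}$, which lets a common factor $q^{j^{2}}$ be pulled out of all three sums.

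After this normalization the coefficient of $s^{0}x^{n+2}$ matches precisely because $(-q;q)_{n+2}=(1+q^{n+2})(-q;q)_{n+1}$, and for $1\le j\le k+1$, after substituting $(-q;q)_{n+2-j}=(1+q^{n+2-j})(-q;q)_{n+1-j}$ and $(-q;q)_{n-j+1}/(-q;q)_{j-1}=(1+q^{j})(-q;q)_{n+1-j}/(-q;q)_{j}$ and cancelling the common factor $(-q;q)_{n+1-j}/(-q;q)_{j}$, the whole statement collapses to the single $q$-binomial identity
\begin{equation*}
(1+q^{m+1})\biggl[{m+1\atop j}\biggr]_{q}=(1+q^{m+j+1})\biggl[{m\atop j}\biggr]_{q}+q^{\,m-j+1}(1+q^{j})\biggl[{m\atop j-1}\biggr]_{q},
\end{equation*}
where $m=n+1-j$. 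I would prove this identity by writing $(1+q^{m+1})\bigl[{m+1\atop j}\bigr]_{q}=\bigl[{m+1\atop j}\bigr]_{q}+q^{m+1}\bigl[{m+1\atop j}\bigr]_{q}$, expanding the first copy of $\bigl[{m+1\atop j}\bigr]_{q}$ by (\ref{1.2b}) and the second by (\ref{1.2a}), and then collecting the coefficients of $\bigl[{m\atop j}\bigr]_{q}$ and $\bigl[{m\atop j-1}\bigr]_{q}$ on the right.

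None of this is deep, so the main thing to monitor is that the three sums range over compatible sets of indices $j$ and that no $q$-binomial silently vanishes. The hypothesis $0\le k\le\frac{n-1}{2}$ gives $k+1\le\left\lfloor\frac{n+1}{2}\right\rfloor$, and hence also $k+1\le\left\lfloor\frac{n+2}{2}\right\rfloor$ and $k\le\left\lfloor\frac{n}{2}\right\rfloor$; thus $\mathcal{U}_{n+2}^{k+1}$, $\mathcal{U}_{n+1}^{k+1}$ and $\mathcal{U}_{n}^{k}$ are all legitimate incomplete polynomials and every $q$-binomial occurring in the reduced identity has nonnegative top and bottom entries, so the term-by-term coefficient comparison is justified. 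The analogous recurrence for $\mathcal{T}_{n}^{k}$ would be obtained in the same way, starting from (\ref{3.2}) and using (\ref{1.2c})--(\ref{1.2d}) in place of (\ref{1.2a})--(\ref{1.2b}).
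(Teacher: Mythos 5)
Your proposal is correct and follows essentially the same route as the paper: both expand the right-hand side via (\ref{3.1}), shift the index in the $q^{n+1}s\,\mathcal{U}_{n}^{k}$ sum using $q^{n+1}q^{(j-1)^2}=q^{j^2}q^{n-2j+2}$, and reduce the coefficient comparison to the two $q$-Pascal rules (\ref{1.2a}) and (\ref{1.2b}) together with $(-q;q)_{n-j+2}=(1+q^{n-j+2})(-q;q)_{n-j+1}$. The only difference is cosmetic — you isolate the final step as a single stated $q$-binomial identity (which is valid), whereas the paper performs the same regrouping inline; your explicit attention to the index ranges and vanishing binomials is if anything slightly more careful than the paper's.
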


\begin{proof}
	From Eq. (\ref{3.1}), we can write
	\begin{align*}
		(1+q^{n+2})x\:\mathcal{U}_{n+1}^{k+1}+q^{n+1}s\:\mathcal{U}_{n}^{k}=& \ (1+q^{n+2})x\sum\limits_{j=0}^{k+1}q^{j^{2}}\biggl[\displaystyle{n-j+1 \atop j}\biggr]_{q}\frac{(-q;q)_{n-j+1}}{(-q;q)_{j}}s^{j}x^{n+1-2j} \\ & \ +q^{n+1}s\sum_{j=0}^{k}q^{j^{2}}\biggl[\displaystyle{n-j \atop j }\biggr]_{q}\frac{(-q;q)_{n-j}}{(-q;q)_{j}}s^{j}x^{n-2j} \\
		=& \ \sum\limits_{j=0}^{k+1}q^{j^{2}}\left\{ (1+q^{n+2})\biggl[\displaystyle{n-j+1 \atop j }\biggr]_{q}+q^{n+1}q^{-2j+1}(1+q^{j})\biggl[\displaystyle{n-j+1 \atop j-1 }\biggr]_{q}\right\} \\ & \times \frac{(-q;q)_{n-j+1}}{(-q;q)_{j}}s^{j}x^{n-2j+2} \\
		=& \ \sum\limits_{j=0}^{k+1}q^{j^{2}}\left\{ \left( \biggl[\displaystyle{n-j+1 \atop j }\biggr]_{q}+q^{n-2j+2}\biggl[\displaystyle{n-j+1 \atop j-1 }\biggr]_{q}\right) \right. \\
		&\ \left. +q^{n-j+2}\left(q^{j}\biggl[\displaystyle{n-j+1 \atop j }\biggr]_{q}+\biggl[\displaystyle{n-j+1 \atop j-1 }\biggr]{j-1}_{q}\right) \right\} \frac{(-q;q)_{n-j+1}}{(-q;q)_{j}}s^{j}x^{n-2j+2}.
	\end{align*}
	Thus using Eq. (\ref{1.2a}) and Eq. (\ref{1.2b}), we get 
	\begin{align*}
		(1+q^{n+2})x\:\mathcal{U}_{n+1}^{k+1}+q^{n+1}s\:\mathcal{U}_{n}^{k}=& \ \sum\limits_{j=0}^{k+1}q^{j^{2}}\left(1+q^{n-j+2}\right) \biggl[\displaystyle{n-j+2 \atop j }\biggr]_{q}\frac{(-q;q)_{n-j+1}}{(-q;q)_{j}}s^{j}x^{n-2j+2} \\
		=& \ \sum\limits_{j=0}^{k+1}q^{j^{2}}\biggl[\displaystyle{n-j+2 \atop j }\biggr]_{q}\frac{(-q;q)_{n-j+2}}{(-q;q)_{j}}s^{j}x^{n-2j+2} \\
		=& \ \mathcal{U}_{n+2}^{k+1}.
	\end{align*}
\end{proof}

\begin{corollary}
	Incomplete $q$-Chebyshev Polynomials of the second kind satisfy the non-homogeneous recurrence relation
	\begin{equation}
	\mathcal{U}_{n+2}^{k} =(1+q^{n+2})x\:\mathcal{U}_{n+1}^{k}+q^{n+1}s\:\mathcal{U}_{n}^{k} -q^{n+1+k^{2}}\biggl[\displaystyle{n-k \atop k }\biggr]_{q}\frac{(-q;q)_{n-k}}{(-q;q)_{k}}s^{k+1}x^{n-2k}.  \label{3.4}
	\end{equation}		
\end{corollary}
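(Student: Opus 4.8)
The plan is to deduce Eq.~(\ref{3.4}) from the recurrence in Eq.~(\ref{3.3}) by tracking what happens when the upper summation index is lowered from $k+1$ to $k$. From the definition~(\ref{3.1}) the polynomials $\:\mathcal{U}_{m}^{k+1}$ and $\:\mathcal{U}_{m}^{k}$ differ only in the single summand $j=k+1$, so, writing
\begin{equation*}
A_{m}:=q^{(k+1)^{2}}\biggl[\displaystyle{m-k-1 \atop k+1 }\biggr]_{q}\frac{(-q;q)_{m-k-1}}{(-q;q)_{k+1}}\,s^{k+1}x^{m-2k-2},
\end{equation*}
one has $\:\mathcal{U}_{m}^{k+1}=\:\mathcal{U}_{m}^{k}+A_{m}$ for every $m$. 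Substituting this identity for $m=n+1$ and $m=n+2$ into Eq.~(\ref{3.3}) and solving for $\:\mathcal{U}_{n+2}^{k}$ gives
\begin{equation*}
\:\mathcal{U}_{n+2}^{k}=(1+q^{n+2})x\:\mathcal{U}_{n+1}^{k}+q^{n+1}s\:\mathcal{U}_{n}^{k}+\Bigl((1+q^{n+2})x\,A_{n+1}-A_{n+2}\Bigr),
\end{equation*}
so the proof reduces to showing that the correction term $(1+q^{n+2})x\,A_{n+1}-A_{n+2}$ equals $-q^{n+1+k^{2}}\bigl[{n-k\atop k}\bigr]_{q}\frac{(-q;q)_{n-k}}{(-q;q)_{k}}s^{k+1}x^{n-2k}$.

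To verify this I would factor the common quantity $q^{(k+1)^{2}}s^{k+1}x^{n-2k}/(-q;q)_{k+1}$ out of $(1+q^{n+2})x\,A_{n+1}-A_{n+2}$: since $A_{n+1}$ carries the $q$-binomial $\bigl[{n-k\atop k+1}\bigr]_{q}$ with the factor $(-q;q)_{n-k}$, while $A_{n+2}$ carries $\bigl[{n-k+1\atop k+1}\bigr]_{q}$ with $(-q;q)_{n-k+1}$, the claim becomes the $q$-binomial identity
\begin{equation*}
(1+q^{n+2})\biggl[\displaystyle{n-k \atop k+1 }\biggr]_{q}(-q;q)_{n-k}-\biggl[\displaystyle{n-k+1 \atop k+1 }\biggr]_{q}(-q;q)_{n-k+1}=-(1+q^{k+1})\,q^{n-2k}\biggl[\displaystyle{n-k \atop k }\biggr]_{q}(-q;q)_{n-k}.
\end{equation*}
I would prove this by first using $(-q;q)_{n-k+1}=(1+q^{n-k+1})(-q;q)_{n-k}$ to pull $(-q;q)_{n-k}$ out of both sides, then applying Eq.~(\ref{1.2b}) in the shape $\bigl[{n-k+1\atop k+1}\bigr]_{q}=\bigl[{n-k\atop k+1}\bigr]_{q}+q^{n-2k}\bigl[{n-k\atop k}\bigr]_{q}$, and finally substituting $\bigl[{n-k\atop k+1}\bigr]_{q}=\frac{[n-2k]_{q}}{[k+1]_{q}}\bigl[{n-k\atop k}\bigr]_{q}$. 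After the common factor $\bigl[{n-k\atop k}\bigr]_{q}$ is cancelled and one uses $q^{n+2}-q^{n-k+1}=q^{n-k+1}(q^{k+1}-1)$ together with $1-q^{m}=(1-q)[m]_{q}$, everything collapses to the scalar identity $-q^{n-k+1}-q^{n-2k}=-q^{n-2k}(1+q^{k+1})$, which is clear.

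The step needing the most care is this last computation, where one has to weigh the exponent $q^{(k+1)^{2}}=q^{k^{2}+2k+1}$ against the powers of $q$ released by the telescoping of the $q$-shifted factorials so that the net power is exactly $q^{n+1+k^{2}}$, and one must apply the $q$-integer identities in the right order so that both $[k+1]_{q}$ and $\bigl[{n-k\atop k}\bigr]_{q}$ drop out. It is also worth recording that the hypothesis $0\le k\le\frac{n-1}{2}$ of Eq.~(\ref{3.3}) is exactly what makes $\:\mathcal{U}_{m}^{k+1}=\:\mathcal{U}_{m}^{k}+A_{m}$ legitimate for $m=n+1$ and $m=n+2$, since it forces $k+1\le\lfloor\frac{n+1}{2}\rfloor$. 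Apart from this routine $q$-algebra I do not anticipate any obstacle.
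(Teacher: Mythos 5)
Your argument is correct: the decomposition $\mathcal{U}_{m}^{k+1}=\mathcal{U}_{m}^{k}+A_{m}$ is legitimate under the hypothesis of Eq.~(\ref{3.3}), and the $q$-binomial identity you reduce to does hold --- using (\ref{1.2b}), the factorization $\bigl[{n-k\atop k+1}\bigr]_{q}=\frac{[n-2k]_{q}}{[k+1]_{q}}\bigl[{n-k\atop k}\bigr]_{q}$ and $(-q;q)_{k+1}=(1+q^{k+1})(-q;q)_{k}$, everything does collapse to $-q^{n-k+1}-q^{n-2k}=-q^{n-2k}(1+q^{k+1})$, and the exponent bookkeeping $q^{(k+1)^{2}}\cdot q^{n-2k}=q^{n+1+k^{2}}$ comes out right. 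However, you took the long way around. The paper states this as a corollary with no proof because the intended derivation is immediate: apply Eq.~(\ref{3.3}) with $k$ replaced by $k-1$, which gives $\mathcal{U}_{n+2}^{k}=(1+q^{n+2})x\,\mathcal{U}_{n+1}^{k}+q^{n+1}s\,\mathcal{U}_{n}^{k-1}$, and then observe from the definition~(\ref{3.1}) that $\mathcal{U}_{n}^{k-1}=\mathcal{U}_{n}^{k}-q^{k^{2}}\bigl[{n-k\atop k}\bigr]_{q}\frac{(-q;q)_{n-k}}{(-q;q)_{k}}s^{k}x^{n-2k}$; multiplying the peeled-off term by $q^{n+1}s$ yields the non-homogeneous term verbatim, with no $q$-binomial manipulation at all. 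The difference is that you chose to lower the superscripts on the two terms whose superscript is $k+1$ in (\ref{3.3}), which forces you to recombine two correction terms, whereas lowering the single superscript $k$ on the $\mathcal{U}_{n}$ term produces exactly one correction term that is already in the target form. Your route is a valid independent verification (and in effect re-proves a case of the Pascal-type recurrences), but for $k=0$ the short route needs the separate direct check that $\mathcal{U}_{n+2}^{0}=(-q;q)_{n+2}x^{n+2}$ satisfies the relation, a case your argument covers uniformly --- a minor point worth noting in your favor.
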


\begin{theorem}
	For $0\leq k\leq \tfrac{n+1}{2},$ the following equality give a relationships between the incomplete $q$-Chebyshev polynomials of the first and second kind
	\begin{equation}
	\mathcal{T}_{n+2}^{k}=x\:\mathcal{U}_{n+1}^{k}+q^{n+1}s\:\mathcal{U}_{n}^{k-1}.\label{3.5}
	\end{equation}
\end{theorem}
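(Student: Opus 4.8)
The plan is to establish (\ref{3.5}) by comparing the explicit combinatorial sums (\ref{3.1}) and (\ref{3.2}) term by term, in the same spirit as the proof of (\ref{3.3}). First I would expand the three quantities as polynomials in $x$ and $s$: the left‑hand side $\mathcal{T}_{n+2}^{k}$ from (\ref{3.2}), and $x\,\mathcal{U}_{n+1}^{k}$ and $q^{n+1}s\,\mathcal{U}_{n}^{k-1}$ from (\ref{3.1}). In the sum for $q^{n+1}s\,\mathcal{U}_{n}^{k-1}$ I would perform the index shift $j\mapsto j-1$, so that all three sums become sums of multiples of $s^{j}x^{n+2-2j}$ with $0\le j\le k$; the contribution of $\mathcal{U}_{n}^{k-1}$ then runs over $1\le j\le k$ only. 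For $j=0$ both sides give the coefficient $(-q;q)_{n+1}$, so it suffices to match the coefficients of $s^{j}x^{n+2-2j}$ for $1\le j\le k$.

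For such $j$, I would factor out the common quantity $q^{j^{2}}\dfrac{(-q;q)_{n+1-j}}{(-q;q)_{j}}$ from every term; the denominator mismatch produced by $\mathcal{U}_{n}^{k-1}$, whose terms carry $(-q;q)_{j-1}^{-1}$, is absorbed via $(-q;q)_{j}=(1+q^{j})(-q;q)_{j-1}$. After this, matching the coefficients reduces to the purely $q$‑binomial identity
\begin{equation*}
{n+1-j \brack j}_{q}+q^{\,n-2j+2}(1+q^{j}){n+1-j \brack j-1}_{q}=\frac{[n+2]_{q}}{[n+2-j]_{q}}{n+2-j \brack j}_{q}.
\end{equation*}
To prove this I would clear $q$‑binomials into $q$‑factorials and multiply both sides by $\dfrac{[n+2-2j]_{q}!\,[j]_{q}!}{[n+1-j]_{q}!}$, which collapses the identity to the elementary $q$‑integer relation
\begin{equation*}
[n+2-2j]_{q}+q^{\,n-2j+2}(1+q^{j})[j]_{q}=[n+2]_{q}.
\end{equation*}
This follows by applying the identity $[a]_{q}+q^{a}[b]_{q}=[a+b]_{q}$ (immediate from $[m]_{q}=(1-q^{m})/(1-q)$) twice: with $(a,b)=(n+2-2j,\ j)$ it gives $[n+2-2j]_{q}+q^{\,n+2-2j}[j]_{q}=[n+2-j]_{q}$, and with $(a,b)=(n+2-j,\ j)$ it gives $[n+2-j]_{q}+q^{\,n+2-j}[j]_{q}=[n+2]_{q}$; hence $[n+2-2j]_{q}+q^{\,n+2-2j}[j]_{q}+q^{\,n+2-j}[j]_{q}=[n+2]_{q}$, and the last two summands combine to $q^{\,n-2j+2}(1+q^{j})[j]_{q}$.

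Reassembling the matched coefficients then yields (\ref{3.5}). The argument is almost entirely routine; the only step that calls for care is the shifted‑factorial bookkeeping — correctly tracking $(-q;q)_{n+1-j}/(-q;q)_{n-j}=1+q^{n-j}$ and $(-q;q)_{j}/(-q;q)_{j-1}=1+q^{j}$ so that all three sums are written against a single common factor before the $q$‑binomial identity above is invoked. I would also record the consistency of the index ranges: the hypothesis $0\le k\le\tfrac{n+1}{2}$ guarantees $k\le\lfloor\tfrac{n+1}{2}\rfloor$, $k\le\lfloor\tfrac{n+2}{2}\rfloor$ and $k-1\le\lfloor\tfrac{n}{2}\rfloor$, so each of $\mathcal{T}_{n+2}^{k}$, $\mathcal{U}_{n+1}^{k}$, $\mathcal{U}_{n}^{k-1}$ is well defined, with the convention $\mathcal{U}_{n}^{-1}=0$ covering the case $k=0$.
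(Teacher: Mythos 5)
Your proposal is correct and follows essentially the same route as the paper: both expand $x\,\mathcal{U}_{n+1}^{k}$ and $q^{n+1}s\,\mathcal{U}_{n}^{k-1}$ via the explicit sum (\ref{3.1}), shift $j\mapsto j-1$ in the second sum, factor out $q^{j^{2}}\frac{(-q;q)_{n+1-j}}{(-q;q)_{j}}s^{j}x^{n+2-2j}$, and reduce to the identity ${n+1-j \brack j}_{q}+q^{\,n-2j+2}(1+q^{j}){n+1-j \brack j-1}_{q}=\frac{[n+2]_{q}}{[n+2-j]_{q}}{n+2-j \brack j}_{q}$. The only difference is that the paper disposes of this last identity by citing (\ref{1.2b}) and (\ref{1.2d}), whereas you reprove it directly from $q$-factorials via $[n+2-2j]_{q}+q^{\,n-2j+2}(1+q^{j})[j]_{q}=[n+2]_{q}$, which is a correct and self-contained substitute.
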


\begin{proof}
	Using Eq. (\ref{3.1}), we obtain
	\begin{align*}
		\:\mathcal{U}_{n+1}^{k}+q^{n+1}s\:\mathcal{U}_{n}^{k-1} 	=\ &  x\sum\limits_{j=0}^{k}q^{j^{2}}\biggl[\displaystyle{n-j+1 \atop j }\biggr]_{q}\frac{(-q;q)_{n-j+1}}{(-q;q)_{j}}s^{j}x^{n+1-2j}+q^{n+1}s\sum\limits_{j=0}^{k-1}q^{j^{2}}\biggl[\displaystyle{n-j \atop j }\biggr]_{q}\frac{(-q;q)_{n-j}}{(-q;q)_{j}}s^{j}x^{n-2j} \\
		= & \ \sum\limits_{j=0}^{k}q^{j^{2}}\left\{ \biggl[\displaystyle{n-j+1 \atop j }\biggr]_{q}+q^{n+1-2j+1}(1+q^{j})\biggl[\displaystyle{n-j+1 \atop j-1 }\biggr]_{q}\right\}  \frac{(-q;q)_{n-j+1}}{(-q;q)_{j}}s^{j}x^{n-2j+2} 
	\end{align*}
	From Eq. (\ref{1.2b}) and Eq. (\ref{1.2d}), we get
	\begin{align*}
		xU_{n+1}^{k}+q^{n+1}sU_{n}^{k-1} = & \ \sum\limits_{j=0}^{k}q^{j^{2}}\frac{\left[ n+2\right] _{q}}{\left[ n-j+2\right] _{q}}\biggl[\displaystyle{n-j+2 \atop j }\biggr]_{q}\frac{(-q;q)_{n-j+1}}{(-q;q)_{j}}s^{j}x^{n-2j+2} \\
		=& \ \mathcal{T}_{n+2}^{k}.
	\end{align*}
\end{proof}

\begin{theorem}
	The incomplete $q$-Chebyshev polynomials of the first kind satisfy
	\begin{equation}
	\mathcal{T}_{n+2}^{k+1}=(1+q^{n+1})x\mathcal{T}_{n+1}^{k+1}+q^{n+1}s\mathcal{T}_{n}^{k}\label{3.6}
	\end{equation}
	for $0\leq k\leq \frac{n-1}{2}$. 
\end{theorem}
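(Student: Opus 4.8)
The plan is to argue exactly as in the proof of the second-kind recurrence~(\ref{3.3}), now starting from the explicit formula~(\ref{3.2}) for $\mathcal{T}_{n}^{k}$. First I would substitute~(\ref{3.2}) into the right-hand side of the asserted identity. In $(1+q^{n+1})x\,\mathcal{T}_{n+1}^{k+1}$ the generic summand carries $x^{n+1-2j}$ and the shifted factorial $(-q;q)_{n-j}$, while in $q^{n+1}s\,\mathcal{T}_{n}^{k}$ it carries $x^{n-2j}$ and $(-q;q)_{n-j-1}$; replacing $j$ by $j-1$ in the second sum aligns both on $x^{n-2j+2}$, and then $(-q;q)_{j}=(1+q^{j})(-q;q)_{j-1}$ together with $q^{(j-1)^{2}+n+1}=q^{j^{2}}q^{n-2j+2}$ aligns them on the common factor $q^{j^{2}}\frac{(-q;q)_{n-j}}{(-q;q)_{j}}\,s^{j}x^{n-2j+2}$. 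The term $j=0$ comes only from the first sum and equals $(-q;q)_{n+1}x^{n+2}$, which is already the $j=0$ term of $\mathcal{T}_{n+2}^{k+1}$; for $1\le j\le k+1$ everything reduces to the scalar identity
\begin{align*}
& (1+q^{n+1})\frac{[n+1]_{q}}{[n-j+1]_{q}}{n-j+1 \brack j}_{q}
+q^{n-2j+2}(1+q^{j})\frac{[n]_{q}}{[n-j+1]_{q}}{n-j+1 \brack j-1}_{q} \\
& \qquad\qquad =(1+q^{n-j+1})\frac{[n+2]_{q}}{[n-j+2]_{q}}{n-j+2 \brack j}_{q},
\end{align*}
since $(1+q^{n-j+1})(-q;q)_{n-j}=(-q;q)_{n-j+1}$ converts the right-hand side into precisely the generic summand of $\mathcal{T}_{n+2}^{k+1}$ in~(\ref{3.2}).

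To prove this scalar identity I would feed it the four $q$-binomial relations recorded in the introduction. The ``Lucas-type'' identities~(\ref{1.2c}) and~(\ref{1.2d}) let me trade each weighted binomial $\frac{[m]_{q}}{[m-i]_{q}}{m-i \brack i}_{q}$ for a combination of ordinary $q$-binomials ${m-i \brack i}_{q}$ and ${m-i-1 \brack i-1}_{q}$; this is the step that makes the first-kind case genuinely heavier than the proof of~(\ref{3.3}), because the three distinct weights $[n+2]_{q}/[n-j+2]_{q}$, $[n+1]_{q}/[n-j+1]_{q}$ and $[n]_{q}/[n-j+1]_{q}$ have to be reconciled. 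After that, the Pascal-type relations~(\ref{1.2a}) and~(\ref{1.2b}) reduce ${n-j+2 \brack j}_{q}$ to ${n-j+1 \brack j}_{q}$ and ${n-j+1 \brack j-1}_{q}$, and after cancellation both sides collapse to the same combination of ${n-j+1 \brack j}_{q}$ and ${n-j+1 \brack j-1}_{q}$. I expect this bookkeeping to be the only real obstacle: it is entirely routine but error-prone, so I would cross-check the identity at $j=1$ and at the largest admissible $j$, where it degenerates to an elementary relation among $q$-integers.

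Finally I would note that the summation ranges are consistent: for $0\le k\le\frac{n-1}{2}$ one has $k+1\le\lfloor\frac{n+2}{2}\rfloor$, so the combined right-hand sum runs over exactly $0\le j\le k+1$ and matches the defining sum of $\mathcal{T}_{n+2}^{k+1}$ term by term, with no boundary contribution left over. There is also a more structural route that avoids the binomial gymnastics: apply~(\ref{3.5}) to write $\mathcal{T}_{n+2}^{k+1}=x\,\mathcal{U}_{n+1}^{k+1}+q^{n+1}s\,\mathcal{U}_{n}^{k}$, use~(\ref{3.5}) again with $n$ replaced by $n-1$ and by $n-2$ to rewrite $\mathcal{T}_{n+1}^{k+1}$ and $\mathcal{T}_{n}^{k}$, and then invoke the second-kind recurrence~(\ref{3.3}) to check that both sides of the claim equal the same combination of $\mathcal{U}_{n}^{k+1}$, $\mathcal{U}_{n-1}^{k}$ and $\mathcal{U}_{n-2}^{k-1}$; along this path the only subtlety is the extreme value $k=\frac{n-1}{2}$ with $n$ odd, where all the superscripts become complete and the claim degenerates to the defining recurrence~(\ref{1.4}) for $\mathcal{T}_{n+2}$.
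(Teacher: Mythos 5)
Your proposal is correct, but your primary route is not the paper's; the ``more structural route'' you mention only in passing at the end is, almost verbatim, the paper's actual proof. The paper never returns to the explicit formula (\ref{3.2}) for this theorem: it writes $\mathcal{T}_{n+2}^{k+1}=x\,\mathcal{U}_{n+1}^{k+1}+q^{n+1}s\,\mathcal{U}_{n}^{k}$ by (\ref{3.5}), expands both $\mathcal{U}$-terms with the second-kind recurrence (\ref{3.3}), regroups the four resulting terms, and recognizes $x\,\mathcal{U}_{n}^{k+1}+q^{n}s\,\mathcal{U}_{n-1}^{k}=\mathcal{T}_{n+1}^{k+1}$ and $x\,\mathcal{U}_{n-1}^{k}+q^{n-1}s\,\mathcal{U}_{n-2}^{k-1}=\mathcal{T}_{n}^{k}$ by (\ref{3.5}) again. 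What this buys is the complete avoidance of the weighted $q$-binomial manipulation that is the crux of your main route: the $\frac{[n]_{q}}{[n-j]_{q}}$ bookkeeping was already paid for in the proofs of (\ref{3.3}) and (\ref{3.5}). Your direct route does go through --- the index alignment and the $j=0$ term are handled correctly, and the scalar identity you isolate is true --- but as written you only assert that identity, and it is the one nontrivial step of that route, so you would have to carry the verification out. The quickest way is not to grind through (\ref{1.2a})--(\ref{1.2d}) but to divide the identity by the common factor $\frac{[n-j]_{q}!}{[n-2j+2]_{q}!\,[j]_{q}!}$, which reduces it to the $q$-integer identity
\begin{equation*}
(1+q^{n+1})[n+1]_{q}[n-2j+2]_{q}+q^{n-2j+2}(1+q^{j})[n]_{q}[j]_{q}=(1+q^{n-j+1})[n+2]_{q}[n-j+1]_{q},
\end{equation*}
both sides of which, multiplied by $(1-q)^{2}$, equal $1-q^{n+2}-q^{2n-2j+2}+q^{3n-2j+4}$. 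Finally, both your routes and the paper's share the same unaddressed boundary caveat: for small $n$ (e.g.\ $n=1$, $k=0$) the structural argument invokes (\ref{3.5}) and (\ref{3.3}) at negative indices, where the claim should instead be observed to degenerate to the complete recurrence (\ref{1.4}); you flag essentially this point, while the paper does not.
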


\begin{proof}
	By using Eq. (\ref{3.3}) and Eq. (\ref{3.5}), we get
	\begin{align*}
		\mathcal{T}_{n+2}^{k+1}=& \ x\:\mathcal{U}_{n+1}^{k+1}+q^{n+1}s\:\mathcal{U}_{n}^{k} \\
		=& \ (1+q^{n+1})x^{2}\:\mathcal{U}_{n}^{k+1}+q^{n}sx\:\mathcal{U}_{n-1}^{k}+q^{n+1}s(1+q^{n})x\:\mathcal{U}_{n-1}^{k}+q^{2n}s^{2}\:\mathcal{U}_{n-2}^{k-1} \\
		=&\ (1+q^{n+1})x\left\{ x\:\mathcal{U}_{n}^{k+1}+q^{n}s\:\mathcal{U}_{n-1}^{k}\right\}+q^{n+1}s\left\{ x\:\mathcal{U}_{n-1}^{k}+q^{n-1}s\:\mathcal{U}_{n-2}^{k-1}\right\}\\
		=& (1+q^{n+1})x\mathcal{T}_{n+1}^{k+1}+q^{n+1}s\mathcal{T}_{n}^{k}.
	\end{align*}
\end{proof}
\begin{corollary}
	Incomplete $q$-Chebyshev polynomials of the first kind satisfy the non-homogeneous recurrence relation 
	\begin{equation}
	\mathcal{T}_{n+2}^{k} =(1+q^{n+1})\mathcal{T}_{n+1}^{k}+q^{n+1}s\mathcal{T}_{n}^{k}-q^{n+1+k^{2}}\frac{\left[ n\right] _{q}}{\left[ n-k\right] _{q}}\biggl[\displaystyle{n-k \atop k }\biggr]_{q}\frac{(-q;q)_{n-k-1}}{(-q;q)_{k}}s^{k+1}x^{n-2k}.  \label{3.7}
	\end{equation}
\end{corollary}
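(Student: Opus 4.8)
The plan is to deduce (\ref{3.7}) from the homogeneous recurrence (\ref{3.6}) by tracking the single summand that distinguishes $\mathcal{T}_{m}^{k+1}$ from $\mathcal{T}_{m}^{k}$, exactly as (\ref{3.4}) follows from (\ref{3.3}). By Definition (\ref{3.2}) these two polynomials differ only in the term $j=k+1$, so if we set
\begin{equation*}
B_{m}:=q^{(k+1)^{2}}\frac{[m]_{q}}{[m-k-1]_{q}}\biggl[\displaystyle{m-k-1 \atop k+1}\biggr]_{q}\frac{(-q;q)_{m-k-2}}{(-q;q)_{k+1}}\,s^{k+1}x^{m-2k-2},
\end{equation*}
then $\mathcal{T}_{m}^{k+1}=\mathcal{T}_{m}^{k}+B_{m}$. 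Writing this for $m=n+2$ and $m=n+1$, substituting into (\ref{3.6}) and solving for $\mathcal{T}_{n+2}^{k}$, everything except the displayed right-hand side cancels against (\ref{3.6}) and the whole claim reduces to the single identity
\begin{equation*}
(1+q^{n+1})x\,B_{n+1}-B_{n+2}=-q^{n+1+k^{2}}\frac{[n]_{q}}{[n-k]_{q}}\biggl[\displaystyle{n-k \atop k}\biggr]_{q}\frac{(-q;q)_{n-k-1}}{(-q;q)_{k}}\,s^{k+1}x^{n-2k}.
\end{equation*}

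To verify this I would strip away the common data. Both $x\,B_{n+1}$ and $B_{n+2}$ carry the factor $q^{(k+1)^{2}}(-q;q)_{k+1}^{-1}s^{k+1}x^{n-2k}$, whereas the right-hand side carries $q^{k^{2}}(-q;q)_{k}^{-1}s^{k+1}x^{n-2k}$; using $(-q;q)_{k+1}=(1+q^{k+1})(-q;q)_{k}$ and $(-q;q)_{n-k}=(1+q^{n-k})(-q;q)_{n-k-1}$ to align the shifted factorials and cancelling $q^{k^{2}}$ (so that $q^{(k+1)^{2}}$ leaves a residual $q^{2k+1}$), the identity becomes the finite $q$-binomial statement
\begin{equation*}
q^{2k+1}\!\left[(1+q^{n+1})\frac{[n+1]_{q}}{[n-k]_{q}}\biggl[\displaystyle{n-k \atop k+1}\biggr]_{q}-(1+q^{n-k})\frac{[n+2]_{q}}{[n-k+1]_{q}}\biggl[\displaystyle{n-k+1 \atop k+1}\biggr]_{q}\right]=-q^{n+1}(1+q^{k+1})\frac{[n]_{q}}{[n-k]_{q}}\biggl[\displaystyle{n-k \atop k}\biggr]_{q}.
\end{equation*}
This verification is where essentially all the work sits, and it is pure bookkeeping: I would settle it with the $q$-Pascal rules (\ref{1.2a})--(\ref{1.2b}) to pass from $\bigl[{n-k+1\atop k+1}\bigr]_{q}$ to $\bigl[{n-k\atop k+1}\bigr]_{q}$ and $\bigl[{n-k\atop k}\bigr]_{q}$, with (\ref{1.2c})--(\ref{1.2d}) to handle the factors of the shape $\frac{[n]_{q}}{[n-k]_{q}}\bigl[{n-k\atop k}\bigr]_{q}$, and with $[n+1]_{q}=1+q[n]_{q}$, $[n+2]_{q}=1+q+q^{2}[n]_{q}$ for the leftover $q$-integer prefactors. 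The only real traps are matching the quadratic $q$-exponents (the shift $j^{2}\mapsto(k+1)^{2}$ is what produces the $q^{k^{2}}$ and the residual $q^{2k+1}$) and reconciling the shifted factorials $(-q;q)_{n-k}$ and $(-q;q)_{n-k-1}$ against $(-q;q)_{k+1}$ versus $(-q;q)_{k}$; a numerical check of, say, $(n,k)=(2,0)$ and $(4,1)$ confirms the signs and exponents.

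An essentially equivalent alternative is to work through (\ref{3.5}): substitute $\mathcal{T}_{n+2}^{k}=x\mathcal{U}_{n+1}^{k}+q^{n+1}s\mathcal{U}_{n}^{k-1}$ together with the analogous expansions of $\mathcal{T}_{n+1}^{k}$ and $\mathcal{T}_{n}^{k}$, apply the second-kind corollary (\ref{3.4}) to $\mathcal{U}_{n+1}^{k}$ and to $\mathcal{U}_{n}^{k-1}$, and collect terms; the correction term of (\ref{3.7}) then emerges as $x$ times the correction term attached by (\ref{3.4}) to $\mathcal{U}_{n+2}^{k}$ plus $q^{n+1}s$ times the one attached to $\mathcal{U}_{n}^{k-1}$, and again collapses by a $q$-binomial reduction to the single stated summand. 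In either route the degenerate values deserve a word on their own: when $k=\lfloor n/2\rfloor$ the term $j=k+1$ is already absent, so the correction vanishes and (\ref{3.7}) is nothing but (\ref{3.6}), while $k=0$ trivialises the binomial manipulations. No idea beyond the recurrences (\ref{1.2a})--(\ref{1.2d}) already recorded is needed, and the argument is identical in spirit to the proof of (\ref{3.4}).
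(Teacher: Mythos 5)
The paper states this corollary without proof, so the only question is whether your plan actually closes the gap. Your overall strategy---deducing (\ref{3.7}) from the homogeneous recurrence (\ref{3.6}) by tracking the summands that separate superscript $k+1$ from superscript $k$---is sound, and the identity you reduce everything to, $(1+q^{n+1})x\,B_{n+1}-B_{n+2}=-q^{n+1+k^{2}}\tfrac{[n]_{q}}{[n-k]_{q}}\bigl[{n-k \atop k}\bigr]_{q}\tfrac{(-q;q)_{n-k-1}}{(-q;q)_{k}}s^{k+1}x^{n-2k}$, is in fact true (it checks out at $(n,k)=(2,0)$ and $(4,1)$, and your stripped-down $q$-binomial form of it is transcribed correctly). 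But you have made the problem substantially harder than it is, and the hard part is precisely what you leave as ``pure bookkeeping'': as written, the proof is a plan, not a proof, since the $q$-Pascal manipulations that would establish the reduced identity are described but never carried out. The shortcut you are missing is that in (\ref{3.6}) only the \emph{last} term carries superscript $k$. Replacing $k$ by $k-1$ throughout (\ref{3.6}) gives $\mathcal{T}_{n+2}^{k}=(1+q^{n+1})x\,\mathcal{T}_{n+1}^{k}+q^{n+1}s\,\mathcal{T}_{n}^{k-1}$, so only one superscript needs adjusting; by Definition (\ref{3.2}), $\mathcal{T}_{n}^{k-1}=\mathcal{T}_{n}^{k}-q^{k^{2}}\tfrac{[n]_{q}}{[n-k]_{q}}\bigl[{n-k \atop k}\bigr]_{q}\tfrac{(-q;q)_{n-k-1}}{(-q;q)_{k}}s^{k}x^{n-2k}$, and multiplying the dropped summand by $q^{n+1}s$ yields the correction term of (\ref{3.7}) verbatim, with no $q$-binomial identity to verify at all. (The same one-superscript shift turns (\ref{3.3}) into (\ref{3.4}), which is presumably the derivation the authors had in mind for both corollaries.) One further error in your aside on degenerate cases: the correction term does \emph{not} vanish when $k=\lfloor n/2\rfloor$. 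For $n=4$, $k=2$ it equals $-q^{9}s^{3}$ (your $B_{n+1}$ vanishes there but $B_{n+2}$ does not), and (\ref{3.7}) still holds; so that case is not ``nothing but (\ref{3.6})'' and cannot be waved away.
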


\begin{theorem}
	For $0\leq k\leq \tfrac{n+1}{2}$, then
	\begin{equation}
	\mathcal{T}_{n+2}^{k}=x\:\mathcal{U}_{n+1}^{k}(x,q^{2}s,q)+qs\:\mathcal{U}_{n}^{k-1}(x,q^{2}s,q)   \label{3.8}
	\end{equation}
	holds.
\end{theorem}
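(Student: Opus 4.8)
The plan is to prove (\ref{3.8}) by a direct term-by-term comparison of explicit sums, using the closed form (\ref{3.2}) for $\mathcal{T}_{n+2}^{k}$ and (\ref{3.1}) for the two $\mathcal{U}$'s on the right (rather than routing through (\ref{3.5}), which would only trade this identity for an equally unobvious one). The first step is to substitute $q^{2}s$ for $s$ in (\ref{3.1}) and multiply out, which gives
\[
x\,\mathcal{U}_{n+1}^{k}(x,q^{2}s,q)=\sum_{j=0}^{k}q^{j^{2}+2j}\biggl[\displaystyle{n+1-j\atop j}\biggr]_{q}\frac{(-q;q)_{n+1-j}}{(-q;q)_{j}}\,s^{j}x^{n+2-2j}
\]
and, after the reindexing $j\mapsto j-1$,
\[
qs\,\mathcal{U}_{n}^{k-1}(x,q^{2}s,q)=\sum_{j=1}^{k}q^{j^{2}}\biggl[\displaystyle{n+1-j\atop j-1}\biggr]_{q}\frac{(-q;q)_{n+1-j}}{(-q;q)_{j-1}}\,s^{j}x^{n+2-2j},
\]
the pleasant point being the exponent collapse $(j-1)^{2}+2(j-1)+1=j^{2}$, so that both sums (and (\ref{3.2})) now carry the same power $q^{j^{2}}$.

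Next I would merge the two series over $0\le j\le k$ — legitimate since the $j=0$ contribution of the second sum is $\bigl[{n+1\atop-1}\bigr]_{q}=0$ — and, using $(-q;q)_{j}/(-q;q)_{j-1}=1+q^{j}$, write the $j$-th summand of $x\,\mathcal{U}_{n+1}^{k}(x,q^{2}s,q)+qs\,\mathcal{U}_{n}^{k-1}(x,q^{2}s,q)$ as $q^{j^{2}}\frac{(-q;q)_{n+1-j}}{(-q;q)_{j}}\,B_{j}\,s^{j}x^{n+2-2j}$, where
\[
B_{j}=q^{2j}\biggl[\displaystyle{n+1-j\atop j}\biggr]_{q}+(1+q^{j})\biggl[\displaystyle{n+1-j\atop j-1}\biggr]_{q}.
\]
By (\ref{3.2}) it then suffices to check $B_{j}=\frac{[n+2]_{q}}{[n+2-j]_{q}}\bigl[{n+2-j\atop j}\bigr]_{q}$, which is exactly the corresponding factor in the $j$-th term of $\mathcal{T}_{n+2}^{k}$.

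For this last identity I would apply (\ref{1.2c}) with $n\mapsto n+2$ and $k\mapsto j$, turning the right-hand side into $q^{j}\bigl[{n+2-j\atop j}\bigr]_{q}+\bigl[{n+1-j\atop j-1}\bigr]_{q}$. Cancelling $\bigl[{n+1-j\atop j-1}\bigr]_{q}$ against the corresponding summand of $B_{j}$ and dividing by $q^{j}$ reduces everything to $q^{j}\bigl[{n+1-j\atop j}\bigr]_{q}+\bigl[{n+1-j\atop j-1}\bigr]_{q}=\bigl[{n+2-j\atop j}\bigr]_{q}$, which is precisely (\ref{1.2a}) with $n\mapsto n+1-j$, $k\mapsto j$. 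Summing the identified terms over $j$ yields (\ref{3.8}).

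I do not expect a genuine obstacle: the computation is mechanical, and the only points that need care are (i) the index shift together with the vanishing boundary term, (ii) the exponent collapse $(j-1)^{2}+2(j-1)+1=j^{2}$, and (iii) verifying that the hypothesis $0\le k\le\frac{n+1}{2}$ keeps $\mathcal{T}_{n+2}^{k}$, $\mathcal{U}_{n+1}^{k}$ and $\mathcal{U}_{n}^{k-1}$ within the admissible index ranges ($k\le\lfloor\frac{n+1}{2}\rfloor$, $k-1\le\lfloor\frac n2\rfloor$, $k\le\lfloor\frac{n+2}{2}\rfloor$) so that the three explicit sums are valid.
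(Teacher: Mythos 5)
Your proposal is correct and follows essentially the same route as the paper: expand both sides via the explicit sums (\ref{3.1})--(\ref{3.2}), shift the index in the second sum (with the same exponent collapse $(j-1)^2+2(j-1)+1=j^2$), absorb $(-q;q)_{j-1}$ into $(-q;q)_j$ via the factor $1+q^j$, and reduce the resulting coefficient identity to (\ref{1.2a}) together with the relation (\ref{1.2c}), which the paper uses implicitly in its final step. Your write-up is, if anything, slightly more careful about the vanishing boundary term and the admissible index ranges.
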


\begin{proof}
	We obtain from Eq. (\ref{3.1})
	\begin{align*}
		x\ \mathcal{U}_{n+1}^{k}(x,q^{2}s,q)+qs\:\mathcal{U}_{n}^{k-1}(x,q^{2}s,q) 		=& \ x\sum\limits_{j=0}^{k}q^{j^{2}}\biggl[\displaystyle{n-j+1 \atop j }\biggr]_{q}\frac{(-q;q)_{n+1-j}}{(-q;q)_{j}}(q^{2}s)^{j}x^{n+1-2j}\\ & +qs\sum\limits_{j=0}^{k-1}q^{j^{2}}\biggl[\displaystyle{n-j \atop j }\biggr]_{q}\frac{(-q;q)_{n-j}}{(-q;q)_{j}}(q^{2}s)^{j}x^{n-2j} \\
		=& \ \sum\limits_{j=0}^{k}q^{j^{2}}\left\{ q^{2j}\biggl[\displaystyle{n-j+1 \atop j }\biggr]_{q}+(1+q^{j})\biggl[\displaystyle{n-j+1 \atop j-1 }\biggr]_{q}\right\} \frac{(-q;q)_{n+1-j}}{(-q;q)_{j}}s^{j}x^{n+2-2j} \\
		=& \ \sum\limits_{j=0}^{k}q^{j^{2}}\left\{ q^{j}\left\{ q^{j}\biggl[\displaystyle{n-j+1 \atop j }\biggr]_{q}+\biggl[\displaystyle{n-j+1 \atop j-1 }\biggr]_{q}\right\} +\biggl[\displaystyle{n-j+1 \atop j-1 }\biggr]_{q}\right\}\\ & \ \times  \frac{(-q;q)_{n+1-j}}{(-q;q)_{j}}s^{j}x^{n+2-2j}.
	\end{align*}
	By using Eq. (\ref{1.2a}), we have
	\begin{align*}
		x\:\mathcal{U}_{n+1}^{k}(x,q^{2}s,q)+qs\:\mathcal{U}_{n}^{k-1}(x,q^{2}s,q)=& \ \sum\limits_{j=0}^{k}q^{j^{2}}\left\{ q^{j}\biggl[\displaystyle{n-j+2 \atop j }\biggr]_{q}+\biggl[\displaystyle{n-j+1 \atop j-1 }\biggr]_{q}\right\} \frac{(-q;q)_{n+1-j}}{(-q;q)_{j}}s^{j}x^{n+2-2j}\\
		=& \ \sum\limits_{j=0}^{k}q^{j^{2}}\frac{\left[ n+2\right] _{q}}{\left[ n+2-j\right] _{q}}\biggl[\displaystyle{n-j+2 \atop j }\biggr]_{q}\frac{(-q;q)_{n+1-j}}{(-q;q)_{j}}s^{j}x^{n+2-2j} \\
		=& \ \mathcal{T}_{n+2}^{k}.
	\end{align*}
\end{proof}
\begin{theorem}
	We have
	\begin{equation}
	(1+q^{n+2})\mathcal{T}_{n+2}^{k}=\:\mathcal{U}_{n+2}^{k}+q^{2n+3}s\:\mathcal{U}_{n}^{k-1},\ \ \ \ \ 0\leq k\leq \left\lfloor \tfrac{n}{2}\right\rfloor .  \label{3.9}
	\end{equation}
\end{theorem}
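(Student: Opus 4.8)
The plan is to avoid manipulating the explicit sum in (\ref{3.2}) directly and instead to combine the two structural identities already proved for incomplete $q$-Chebyshev polynomials: the mixed relation (\ref{3.5}) together with the second-kind recurrence (\ref{3.3}). This keeps the whole argument to a few lines of algebra in the polynomial ring in $x$ and $s$, and it mirrors the proof of Theorem on (\ref{3.6}).

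First I would start from (\ref{3.5}), namely
\[
\mathcal{T}_{n+2}^{k}=x\:\mathcal{U}_{n+1}^{k}+q^{n+1}s\:\mathcal{U}_{n}^{k-1},
\]
and multiply both sides by $1+q^{n+2}$. Next I would rewrite the term $(1+q^{n+2})x\:\mathcal{U}_{n+1}^{k}$ by means of the recurrence (\ref{3.3}) with $k$ replaced by $k-1$ (which is valid for $1\le k\le\tfrac{n+1}{2}$, hence in particular for $1\le k\le\lfloor n/2\rfloor$); in that form it reads
\[
\mathcal{U}_{n+2}^{k}=(1+q^{n+2})x\:\mathcal{U}_{n+1}^{k}+q^{n+1}s\:\mathcal{U}_{n}^{k-1},
\]
so that $(1+q^{n+2})x\:\mathcal{U}_{n+1}^{k}=\mathcal{U}_{n+2}^{k}-q^{n+1}s\:\mathcal{U}_{n}^{k-1}$. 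Substituting this into the previous display and collecting the two $s\:\mathcal{U}_{n}^{k-1}$ terms, the coefficient of $s\:\mathcal{U}_{n}^{k-1}$ becomes $-q^{n+1}+(1+q^{n+2})q^{n+1}=q^{n+1}\cdot q^{n+2}=q^{2n+3}$, which gives exactly
\[
(1+q^{n+2})\mathcal{T}_{n+2}^{k}=\mathcal{U}_{n+2}^{k}+q^{2n+3}s\:\mathcal{U}_{n}^{k-1}.
\]

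Finally I would treat the boundary value $k=0$ separately, since the shifted form of (\ref{3.3}) was only invoked for $k\ge1$. With the convention $\mathcal{U}_{n}^{-1}\equiv0$, both sides at $k=0$ collapse to $(-q;q)_{n+2}\,x^{n+2}=(1+q^{n+2})(-q;q)_{n+1}\,x^{n+2}$, using $\mathcal{T}_{n+2}^{0}=(-q;q)_{n+1}x^{n+2}$ and $\mathcal{U}_{n+2}^{0}=(-q;q)_{n+2}x^{n+2}$ read off from (\ref{3.2}) and (\ref{3.1}); this completes the claimed range $0\le k\le\lfloor n/2\rfloor$. I do not anticipate any real obstacle: the only delicate points are the bookkeeping of the index shift in (\ref{3.3}) and the trivial $k=0$ check. (One could instead argue directly from (\ref{3.1})--(\ref{3.2}) via (\ref{1.2c})--(\ref{1.2d}), but that forces a re-pairing of the summands rather than a term-by-term comparison, so the route through (\ref{3.3}) and (\ref{3.5}) is both shorter and cleaner.)
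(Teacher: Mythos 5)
Your proposal is correct and is essentially the paper's own argument: the paper likewise applies the second-kind recurrence (\ref{3.3}) (with the index shifted) to $\mathcal{U}_{n+2}^{k}$ and then recognizes the resulting combination $x\,\mathcal{U}_{n+1}^{k}+q^{n+1}s\,\mathcal{U}_{n}^{k-1}$ as $\mathcal{T}_{n+2}^{k}$, the only difference being that the paper re-expands this into explicit sums and reapplies (\ref{1.2b}) and (\ref{1.2d}) where you simply cite (\ref{3.5}). Your version is the cleaner packaging of the same idea, and your separate $k=0$ check (which the paper omits) is a welcome addition.
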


\begin{proof}
	From Eq. (\ref{3.3}) and Eq. (\ref{3.1}), we get
	\begin{align*}
		\:\mathcal{U}_{n+2}^{k} +q^{2n+3}s\:\mathcal{U}_{n}^{k-1} 		=&\ \left\{(1+q^{n+2})x\:\mathcal{U}_{n+1}^{k}+q^{n+1}s\:\mathcal{U}_{n}^{k-1}\right\}+q^{2n+3}s\:\mathcal{U}_{n}^{k-1} \\
		=&\ \sum\limits_{j=0}^{k}q^{j^{2}}\left\{ \biggl[\displaystyle{n-j+1 \atop j }\biggr]_{q}+q^{n+1-2j+1}(1+q^{j})\biggl[\displaystyle{n-j+1 \atop j-1 }\biggr]_{q}\right\}  \frac{(-q;q)_{n+1-j}}{(-q;q)_{j}}s^{j}x^{n+2-2j} \\
		&\ +q^{n+2}\sum\limits_{j=0}^{k}q^{j^{2}}\left\{ \biggl[\displaystyle{n-j+1 \atop j }\biggr]_{q}+q^{n+1-2j+1}(1+q^{j})\biggl[\displaystyle{n-j+1 \atop j-1 }\biggr]_{q}\right\}  \frac{(-q;q)_{n+1-j}}{(-q;q)_{j}}s^{j}x^{n+2-2j} \\
	\end{align*}
	We get the following result from Eq. (\ref{1.2b}) and Eq. (\ref{1.2d})
	\begin{align*}
		\mathcal{U}_{n+2}^{k}+q^{2n+3}s\:\mathcal{U}_{n}^{k-1}		=& \ \sum\limits_{j=0}^{k}q^{j^{2}}\frac{\left[ n+2\right] _{q}}{\left[ n+2-j\right] _{q}}\biggl[\displaystyle{n-j+2 \atop j }\biggr]_{q}\frac{(-q;q)_{n+1-j}}{(-q;q)_{j}}s^{j}x^{n+2-2j}\\
		&\ +q^{n+2}\sum\limits_{j=0}^{k}q^{j^{2}}\frac{\left[ n+2\right]_{q}}{\left[ n+2-j\right] _{q}}\biggl[\displaystyle{n-j+2 \atop j }\biggr]_{q}\frac{(-q;q)_{n+1-j}}{(-q;q)_{j}}s^{j}x^{n+2-2j} \\
		=&\ \mathcal{T}_{n+2}^{k}+q^{n+2}\mathcal{T}_{n+2}^{k}.
	\end{align*}
\end{proof}
\begin{lemma} \label{lemma1}
	We have
	\begin{equation}
	\frac{d\,\:\mathcal{U}_{n}}{dx}=nx^{-1}\:\mathcal{U}_{n}-2x^{-1}\sum\limits_{j=0}^{\lfloor \frac{n}{2}\rfloor }jq^{j^{2}}\biggl[\displaystyle{n-j \atop j }\biggr]_{q}\frac{(-q;q)_{n-j}}{(-q;q)_{j}}s^{j}x^{n-2j}  \label{3.9a}
	\end{equation}
	and
	\begin{equation}
	\frac{d\,\mathcal{T}_{n}}{dx}=nx^{-1}\mathcal{T}_{n}-2x^{-1}\sum\limits_{j=0}^{\lfloor \frac{n}{2}\rfloor }jq^{j^{2}}\frac{\left[ n\right] _{q}}{\left[ n-j\right] _{q}}\biggl[\displaystyle{n-j \atop j }\biggr]_{q}\frac{(-q;q)_{n-j-1}}{(-q;q)_{j}}s^{j}x^{n-2j}.  \label{3.9b}
	\end{equation}
\end{lemma}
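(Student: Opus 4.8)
The plan is to differentiate the explicit combinatorial expansions \eqref{1.5} and \eqref{1.6} with respect to $x$ term by term and then regroup the result. Since the factors $q^{j^{2}}$, $\bigl[{n-j\atop j}\bigr]_{q}$, the ratio $(-q;q)_{n-j}/(-q;q)_{j}$, and $s^{j}$ contain no $x$, the only $x$-dependence in the $j$-th summand of $\:\mathcal{U}_{n}$ is the monomial $x^{n-2j}$, whose derivative is $(n-2j)x^{n-2j-1}$.

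Concretely, from \eqref{1.5} I would first obtain
\begin{equation*}
\frac{d\,\:\mathcal{U}_{n}}{dx}=\sum_{j=0}^{\lfloor n/2\rfloor}(n-2j)\,q^{j^{2}}\biggl[\displaystyle{n-j\atop j}\biggr]_{q}\frac{(-q;q)_{n-j}}{(-q;q)_{j}}\,s^{j}x^{n-2j-1},
\end{equation*}
and then split the coefficient $n-2j$ into its two parts while pulling a factor $x^{-1}$ out of $x^{n-2j-1}$. The part carrying the coefficient $n$ reassembles, by \eqref{1.5} again, into $nx^{-1}\:\mathcal{U}_{n}$; the part carrying $-2j$ is precisely the second sum on the right of \eqref{3.9a}. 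This gives the first formula. The second formula \eqref{3.9b} is proved identically starting from \eqref{1.6}: the $x$-free weight is now $\tfrac{[n]_{q}}{[n-j]_{q}}\bigl[{n-j\atop j}\bigr]_{q}(-q;q)_{n-j-1}/(-q;q)_{j}$, but it still factors out of the derivative, so the same split of $n-2j$ yields $nx^{-1}\mathcal{T}_{n}$ plus the stated correction term.

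I do not expect any genuine obstacle here; the argument is a direct differentiation. The only point to keep in mind is the degenerate top term when $n$ is even and $j=n/2$: that summand of $\:\mathcal{U}_{n}$ (respectively $\mathcal{T}_{n}$) is constant in $x$ and hence contributes nothing to the derivative, which is consistent with the final formula because its contributions to $nx^{-1}\:\mathcal{U}_{n}$ and to the $-2x^{-1}\sum_{j}j(\cdots)$ term cancel exactly. Also, the formulas involve $x^{-1}$, so they are to be read as identities of rational functions (equivalently, for $x\neq 0$), and no separate case analysis is needed.
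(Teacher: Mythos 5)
Your proposal is correct and matches the paper's own proof essentially verbatim: both differentiate the explicit expansions \eqref{1.5} and \eqref{1.6} term by term, obtain the factor $(n-2j)x^{n-2j-1}$, and split $n-2j$ into $n$ and $-2j$ to reassemble $nx^{-1}\mathcal{U}_{n}$ (resp.\ $nx^{-1}\mathcal{T}_{n}$) plus the correction sum. Your remarks about the constant top term when $n$ is even and about reading $x^{-1}$ as a rational-function identity are sensible additions that the paper does not bother to make.
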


\begin{proof}
	By using Eq. (\ref{1.5}), we have
	\begin{eqnarray*}
		\frac{d\,\:\mathcal{U}_{n}}{dx} &=&\frac{d}{dx}\biggl\{\sum\limits_{j=0}^{\left\lfloor \frac{n}{2}\right\rfloor }q^{j^{2}}\biggl[\displaystyle{n-j \atop j}\biggr]_{q}\frac{(-q;q)_{n-j}}{(-q;q)_{j}}s^{j}x^{n-2j}\biggr\} \\&=&\sum\limits_{j=0}^{\left\lfloor \frac{n}{2}\right\rfloor }q^{j^{2}}(n-2j)\biggl[\displaystyle{n-j \atop j }\biggr]_{q}\frac{(-q;q)_{n-j}}{(-q;q)_{j}}s^{j}x^{n-2j-1} \\&=&nx^{-1}\:\mathcal{U}_{n}-2\sum\limits_{j=0}^{\left\lfloor \frac{n}{2}\right\rfloor }jq^{j^{2}}\biggl[\displaystyle{n-j \atop j }\biggr]_{q}\frac{(-q;q)_{n-j}}{(-q;q)_{j}}s^{j}x^{n-2j-1}.
	\end{eqnarray*}
	Similarly, from Eq. (\ref{1.6}), we get Eq. (\ref{3.9b}).
\end{proof}
Using Lemma \ref{lemma1} , we can prove the following theorem.
\begin{theorem}
	We have
	\begin{equation}
	\sum\limits_{k=0}^{\left\lfloor \frac{n}{2}\right\rfloor}\:\mathcal{U}_{n}^{k}=\left( \left\lfloor \frac{n}{2}\right\rfloor -\frac{n}{2}+1\right) \:\mathcal{U}_{n}+\frac{x}{2}\frac{d\:\mathcal{U}_{n}}{dx}.  \label{3.10}
	\end{equation}
\end{theorem}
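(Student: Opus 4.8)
The plan is to expand the left-hand side by definition, interchange the order of summation, and then recognize the resulting inner sum via Lemma \ref{lemma1}. Write $a_{j}:=q^{j^{2}}\bigl[{n-j\atop j}\bigr]_{q}\frac{(-q;q)_{n-j}}{(-q;q)_{j}}s^{j}x^{n-2j}$, so that by \eqref{3.1} we have $\:\mathcal{U}_{n}^{k}=\sum_{j=0}^{k}a_{j}$ and, by \eqref{1.5}, $\:\mathcal{U}_{n}=\sum_{j=0}^{\lfloor n/2\rfloor}a_{j}$.

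First I would swap the two summations:
\begin{equation*}
\sum_{k=0}^{\lfloor n/2\rfloor}\:\mathcal{U}_{n}^{k}=\sum_{k=0}^{\lfloor n/2\rfloor}\sum_{j=0}^{k}a_{j}=\sum_{j=0}^{\lfloor n/2\rfloor}a_{j}\#\{k:\ j\le k\le \lfloor n/2\rfloor\}=\sum_{j=0}^{\lfloor n/2\rfloor}\Bigl(\bigl\lfloor \tfrac{n}{2}\bigr\rfloor-j+1\Bigr)a_{j}.
\end{equation*}
Splitting this last sum into the part with coefficient $\lfloor n/2\rfloor+1$ and the part with coefficient $-j$ gives $\bigl(\lfloor n/2\rfloor+1\bigr)\:\mathcal{U}_{n}-\sum_{j=0}^{\lfloor n/2\rfloor}j\,a_{j}$.

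Next I would invoke Lemma \ref{lemma1}: equation \eqref{3.9a} reads $\frac{d\:\mathcal{U}_{n}}{dx}=nx^{-1}\:\mathcal{U}_{n}-2x^{-1}\sum_{j=0}^{\lfloor n/2\rfloor}j\,a_{j}$, hence $\sum_{j=0}^{\lfloor n/2\rfloor}j\,a_{j}=\frac{n}{2}\:\mathcal{U}_{n}-\frac{x}{2}\frac{d\:\mathcal{U}_{n}}{dx}$. Substituting this into the expression above yields
\begin{equation*}
\sum_{k=0}^{\lfloor n/2\rfloor}\:\mathcal{U}_{n}^{k}=\Bigl(\bigl\lfloor \tfrac{n}{2}\bigr\rfloor+1\Bigr)\:\mathcal{U}_{n}-\frac{n}{2}\:\mathcal{U}_{n}+\frac{x}{2}\frac{d\:\mathcal{U}_{n}}{dx}=\Bigl(\bigl\lfloor \tfrac{n}{2}\bigr\rfloor-\frac{n}{2}+1\Bigr)\:\mathcal{U}_{n}+\frac{x}{2}\frac{d\:\mathcal{U}_{n}}{dx},
\end{equation*}
which is \eqref{3.10}.

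There is no real obstacle here; the proof is essentially a bookkeeping argument. The only point requiring a moment's care is the combinatorial count $\#\{k:\ j\le k\le \lfloor n/2\rfloor\}=\lfloor n/2\rfloor-j+1$ in the interchange of summations, and making sure the upper limit $\lfloor n/2\rfloor$ is used consistently in both $\sum_k$ and in the definition \eqref{3.1} of $\:\mathcal{U}_{n}^{k}$ so that no boundary terms are dropped. Everything else follows by direct substitution from Lemma \ref{lemma1}.
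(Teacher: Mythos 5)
Your proposal is correct and follows essentially the same route as the paper: the paper also arrives at $\sum_{j=0}^{\lfloor n/2\rfloor}\left(\left\lfloor \frac{n}{2}\right\rfloor+1-j\right)a_{j}$ (by writing out the partial sums term by term and counting repetitions rather than by a formal interchange of summation), splits off the $-j$ part, and invokes Lemma \ref{lemma1} exactly as you do. No substantive difference.
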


\begin{proof}
	From Eq. (\ref{3.1}), we have
	\begin{align*}
		\sum\limits_{k=0}^{\lfloor \frac{n}{2}\rfloor }\:\mathcal{U}_{n}^{k}=&\ \mathcal{U}_{n}^{0}+\:\mathcal{U}_{n}^{1}+\cdots+\:\mathcal{U}_{n}^{\left\lfloor \frac{n}{2}\right\rfloor } \\
		=& \ \left( q^{0}\biggl[\displaystyle{n \atop 0 }\biggr]_{q}\frac{(-q;q)_{n}}{(-q;q)_{0}}x^{n}\right) +\left( q^{0}\biggl[\displaystyle{n \atop 0}\biggr]_{q}\frac{(-q;q)_{n}}{(-q;q)_{0}}x^{n}+q
		\biggl[\displaystyle{n-1 \atop 1 }\biggr]_{q}\frac{(-q;q)_{n-1}}{(-q;q)_{1}}sx^{n-2}\right)+ \cdots \\
		&\ +\biggl(q^{0}\biggl[\displaystyle{n \atop 0 }\biggr]_{q}\frac{(-q;q)_{n}}{(-q;q)_{0}}x^{n}+q\biggl[\displaystyle{n-1 \atop 1 }\biggr]_{q}\frac{(-q;q)_{n-1}}{(-q;q)_{1}}sx^{n-2}+ \cdots +q^{\lfloor \frac{n}{2}\rfloor }
		\biggl[\displaystyle{n-\lfloor \frac{n}{2}\rfloor \atop \lfloor \frac{n}{2}\rfloor }\biggr]_{q}\frac{(-q;q)_{n-\lfloor \frac{n}{2}\rfloor }}{(-q;q)_{\lfloor \frac{n}{2}\rfloor }}s^{\lfloor \frac{n}{2}\rfloor }x^{n-2\lfloor \frac{n}{2}\rfloor }\biggr) \\
		=&\ \biggl(\left\lfloor \frac{n}{2}\right\rfloor +1\biggr)\left( q^{0}
		\biggl[\displaystyle{n \atop 0 }\biggr]_{q}\frac{(-q;q)_{n}}{(-q;q)_{0}}x^{n}\right) +\biggl(\left\lfloor \frac{n}{2}\right\rfloor +1-1\biggr)\left( q
		\biggl[\displaystyle{n-1 \atop 1}\biggr]_{q}\frac{(-q;q)_{n-1}}{(-q;q)_{1}}sx^{n-2}\right)+\cdots \\
		& \ +\biggl(\left\lfloor \frac{n}{2}\right\rfloor +1-\left\lfloor \frac{n}{2}\right\rfloor \biggr)\left( q^{\lfloor \frac{n}{2}\rfloor }
		\biggl[\displaystyle{n-\lfloor\frac{n}{2}\rfloor \atop \lfloor \frac{n}{2}\rfloor }\biggr]_{q}\frac{(-q;q)_{n-\lfloor \frac{n}{2}\rfloor }}{(-q;q)_{\lfloor \frac{n}{2}\rfloor }}s^{\lfloor \frac{n}{2}\rfloor }x^{n-2\lfloor \frac{n}{2}\rfloor }\right) \\
		=& \ \sum\limits_{j=0}^{\lfloor \frac{n}{2}\rfloor }\left( \left\lfloor \frac{n}{2}\right\rfloor +1-j\right) q^{J^{2}}
		\biggl[\displaystyle{n-j \atop j }\biggr]_{q}\frac{(-q;q)_{n-j}}{(-q;q)_{j}}s^{j}x^{n-2j} \\
		=&\ \biggl(\left\lfloor \frac{n}{2}\right\rfloor +1\biggr)\:\mathcal{U}_{n}-\sum\limits_{j=0}^{\lfloor \frac{n}{2}\rfloor }jq^{j^{2}}
		\biggl[\displaystyle{n-j \atop j }\biggr]_{q}\frac{(-q;q)_{n-j}}{(-q;q)_{j}}s^{j}x^{n-2j}.
	\end{align*}
	Then by using Lemma \ref{lemma1}, we get
	\begin{equation*}
	\sum\limits_{k=0}^{\lfloor \frac{n}{2}\rfloor }\:\mathcal{U}_{n}^{k}=\left(\left\lfloor \frac{n}{2}\right\rfloor -\frac{n}{2}+1\right) \:\mathcal{U}_{n}+\frac{x}{2}\frac{d\,\:\mathcal{U}_{n}}{dx}.
	\end{equation*}
\end{proof}

\begin{theorem}
	We have
	\begin{equation}
	\sum\limits_{k=0}^{\left[ \frac{n}{2}\right] }\mathcal{T}_{n}^{k}=\left(\left\lfloor \frac{n}{2}\right\rfloor -\frac{n}{2}+1\right) \mathcal{T}_{n}+\frac{x}{2}\frac{d\,\mathcal{T}_{n}}{dx}.  \label{3.11}
	\end{equation}
\end{theorem}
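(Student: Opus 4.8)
The plan is to imitate the proof of Theorem \eqref{3.10} almost verbatim, replacing the explicit formula \eqref{1.5} by the first‑kind formula \eqref{1.6} and the first identity of Lemma \ref{lemma1} by its second identity \eqref{3.9b}. First I would expand the left‑hand side using Definition \eqref{3.2}: writing $\mathcal{T}_n^0+\mathcal{T}_n^1+\cdots+\mathcal{T}_n^{\lfloor n/2\rfloor}$ as a triangular array of monomials and collecting the coefficient of $s^{j}x^{n-2j}$, one observes that the summand indexed by $j$ occurs in $\mathcal{T}_n^{k}$ precisely for those $k$ with $j\le k\le\lfloor n/2\rfloor$, hence with multiplicity $\lfloor n/2\rfloor+1-j$. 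This gives
\[
\sum_{k=0}^{\lfloor n/2\rfloor}\mathcal{T}_n^{k}=\sum_{j=0}^{\lfloor n/2\rfloor}\left(\left\lfloor\tfrac n2\right\rfloor+1-j\right)q^{j^{2}}\frac{[n]_{q}}{[n-j]_{q}}\biggl[\displaystyle{n-j\atop j}\biggr]_{q}\frac{(-q;q)_{n-j-1}}{(-q;q)_{j}}s^{j}x^{n-2j}.
\]

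Next I would split this into the part carrying the constant weight $\lfloor n/2\rfloor+1$, which by \eqref{1.6} is simply $(\lfloor n/2\rfloor+1)\mathcal{T}_n$, and the part carrying the weight $-j$. For the latter, solving \eqref{3.9b} for the $j$‑weighted sum (multiply through by $x$ and rearrange) yields
\[
\sum_{j=0}^{\lfloor n/2\rfloor}jq^{j^{2}}\frac{[n]_{q}}{[n-j]_{q}}\biggl[\displaystyle{n-j\atop j}\biggr]_{q}\frac{(-q;q)_{n-j-1}}{(-q;q)_{j}}s^{j}x^{n-2j}=\frac n2\,\mathcal{T}_n-\frac x2\,\frac{d\mathcal{T}_n}{dx}.
\]
Substituting this and simplifying the coefficient of $\mathcal{T}_n$ produces exactly the asserted identity \eqref{3.11}.

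The only genuine point requiring care is the combinatorial bookkeeping in the first step, i.e.\ verifying that the $j$‑th monomial appears in $\mathcal{T}_n^{k}$ exactly when $k\ge j$ and therefore with multiplicity $\lfloor n/2\rfloor+1-j$; however this is literally the same argument as in the proof of \eqref{3.10}, so it presents no real obstacle, and the remaining manipulations are routine substitutions. I would also remark that the whole computation is purely formal in the variable $x$ (the derivative being the ordinary polynomial derivative), so no analytic subtleties enter.
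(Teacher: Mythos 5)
Your proposal is correct and coincides with the paper's own argument: the authors likewise expand $\sum_{k}\mathcal{T}_{n}^{k}$ via Eq.~(\ref{3.2}), count that the $j$-th monomial occurs with multiplicity $\lfloor n/2\rfloor+1-j$, split off $(\lfloor n/2\rfloor+1)\mathcal{T}_{n}$, and eliminate the $j$-weighted sum using Eq.~(\ref{3.9b}) of Lemma~\ref{lemma1}. No further comment is needed.
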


\begin{proof}
	We have from Eq. (\ref{3.2})
	\begin{align*}
		\sum\limits_{k=0}^{\lfloor \frac{n}{2}\rfloor }\mathcal{T}_{n}^{k}=&\ \mathcal{T}_{n}^{0}+\mathcal{T}_{n}^{1}+\cdots+\mathcal{T}_{n}^{\left\lfloor \frac{n}{2}\right\rfloor } \\
		=& \ (q^{0}
		\biggl[\displaystyle{n \atop 0 }\biggr]_{q}\frac{(-q;q)_{n-1}}{(-q;q)_{0}}x^{n})+(q^{0}\biggl[\displaystyle{n \atop 0 }\biggr]_{q}\frac{(-q;q)_{n-1}}{(-q;q)_{0}}x^{n}+q\frac{\left[ n\right] _{q}}{\left[ n-1\right] _{q}}\biggl[\displaystyle{n-1 \atop 1 }\biggr]_{q}\frac{(-q;q)_{n-2}}{(-q;q)_{1}}sx^{n-2})+\cdots\\
		& \ +\left( q^{0}\biggl[\displaystyle{n \atop 0 }\biggr]_{q}\frac{(-q;q)_{n-1}}{(-q;q)_{0}}x^{n}+q\frac{\left[ n\right] _{q}}{\left[ n-1\right] _{q}}\biggl[\displaystyle{n-1 \atop 1 }\biggr]				_{q}\frac{(-q;q)_{n-2}}{(-q;q)_{1}}sx^{n-2}\right.+\cdots \\
		&\ \left. +q^{^{\lfloor \frac{n}{2}\rfloor ^{2}}}\frac{\left[ n\right]_{q}}{\left[ n-\lfloor \frac{n}{2}\rfloor \right] _{q}}
		\biggl[\displaystyle{n-\lfloor \frac{n}{2}\rfloor \atop \lfloor \frac{n}{2}\rfloor }\biggr]_{q}\frac{(-q;q)_{n-\lfloor \frac{n}{2}\rfloor -1}}{(-q;q)_{\lfloor \frac{n}{2}\rfloor}}s^{\lfloor \frac{n}{2}\rfloor }x^{n-2\lfloor \frac{n}{2}\rfloor }\right) \\
		=&\ \sum\limits_{j=0}^{\lfloor \frac{n}{2}\rfloor }\left( \left\lfloor \frac{n}{2}\right\rfloor +1-j\right) q^{j^{2}}\frac{\left[ n\right] _{q}}{\left[n-j\right] _{q}}\biggl[\displaystyle{n-j \atop j }\biggr]_{q}\frac{(-q;q)_{n-j-1}}{(-q;q)_{j}}s^{j}x^{n-2j} \\
		=& \ \left( \left\lfloor \frac{n}{2}\right\rfloor +1\right)\mathcal{T}_{n}-\sum\limits_{j=0}^{\lfloor \frac{n}{2}\rfloor }jq^{j^{2}}\frac{\left[ n\right] _{q}}{\left[ n-j\right] _{q}}\biggl[\displaystyle{n-j \atop j }\biggr]_{q}\frac{(-q;q)_{n-j-1}}{(-q;q)_{j}}s^{j}x^{n-2j}.
	\end{align*}
	Lemma \ref{lemma1} implies that
	\begin{equation*}
	\sum\limits_{k=0}^{\lfloor \frac{n}{2}\rfloor }\mathcal{T}_{n}^{k}=\left(\left\lfloor \frac{n}{2}\right\rfloor -\frac{n}{2}+1\right) \mathcal{T}_{n}+\frac{x}{2}\frac{d\,\mathcal{T}_{n}}{dx}.
	\end{equation*}
\end{proof}

\section{Tables and Graphs of Incomplete $q$-Chebyshev polynomials}

In this section, we display the graphs of the $q$-Chebyshev polynomials and incomplete $q$-Chebyshev polynomials. Also we give the tables of some special cases of incomplete $q$-Chebyshev polynomials and numbers. 
\begin{table}[h!]
	\centering
	\caption{Some special cases of the incomplete $q$-Chebyshev polynomials of the second kind}
\begin{equation*}
		\begin{tabular}{|l|c|l|l|l|}
			\hline
			$x$ & $s$ & $q$ & $\:\mathcal{U}_{n}^{k}(x,s,q)$ & Incomplete $q$-Chebyshev polynomials of the second kind \\ \hline\hline
			$x$ & $-1$ & $1$ & $U_{n}^{k}(x)$ & Incomplete Chebyshev polynomials of the second kind \\ \hline
			$\frac{x}{2}$ & $y$ & $1$ & $F_{n+1}^{(k)}(x,y)$ & Incomplete bivariate Fibonacci polynomials \\ \hline
			$\frac{x}{2}$ & $1$ & $1$ & $F_{n+1}^{(k)}(x)$ & Incomplete Fibonacci polynomials \\ \hline
			$\frac{1}{2}$ & $1$ & $1$ & $F_{n+1}^{(k)}$ & Incomplete Fibonacci numbers \\ \hline
			$x$ & $1$ & $1$ & $P_{n+1}^{(k)}(x)$ & Incomplete Pell polynomials \\ \hline
			$1$ & $1$ & $1$ & $P_{n+1}^{(k)}$ & Incomplete Pell numbers \\ \hline
			$\frac{1}{2}$ & $2y$ & $1$ & $J_{n+1}^{(k)}(x)$ & Incomplete Jacobsthal polynomials \\ \hline
			$\frac{1}{2}$ & $2$ & $1$ & $J_{n+1}^{(k)}$ & Incomplete Jacobsthal numbers\\ \hline
		\end{tabular}
	\end{equation*}	
\end{table}
\begin{table}[h!]
	\centering
	\caption{Some special cases of the incomplete $q$-Chebyshev polynomials of the first kind}
	
	\begin{equation*}
		\begin{tabular}{|l|c|l|l|l|}
			\hline
			$x$ & $s$ & $q$ & $\mathcal{T}_{n}^{k}(x,s,q)$ & Incomplete $q$-Chebyshev polynomials of the first kind \\ \hline\hline
			$x$ & $-1$ & $1$ & $T_{n}^{k}(x)$ & Incomplete Chebyshev polynomials of the first kind \\ \hline
			$\frac{x}{2}$ & $y$ & $1$ & $\frac{1}{2}L_{n}^{(k)}(x,y)$ & Incomplete
			Bivariate Lucas polynomials \\ \hline
			$\frac{x}{2}$ & $1$ & $1$ & $\frac{1}{2}L_{n}^{(k)}(x)$ & Incomplete Lucas
			polynomials \\ \hline
			$\frac{1}{2}$ & $1$ & $1$ & $\frac{1}{2}L_{n}^{(k)}$ & Incomplete Lucas
			numbers \\ \hline
			$x$ & $1$ & $1$ & $\frac{1}{2}Q_{n}^{(k)}(x)$ & Incomplete Pell-Lucas
			polynomials \\ \hline
			$1$ & $1$ & $1$ & $\frac{1}{2}Q_{n}^{(k)}$ & Incomplete Pell-Lucas numbers
			\\ \hline
			$\frac{1}{2}$ & $2y$ & $1$ & $\frac{1}{2}~j_{n}^{(k)}(y)$ & Incomplete
			Jacobsthal-Lucas polynomials \\ \hline
			$\frac{1}{2}$ & $2$ & $1$ & $\frac{1}{2}~j_{n}^{(k)}$ & Incomplete
			Jacobsthal-Lucas numbers \\ \hline
		\end{tabular}
	\end{equation*}
\end{table}

The numerical results for the incomplete Chebyshev numbers, first and second kind, incomplete Fibonacci, incomplete Pell, and incomplete Jacobsthal numbers are displayed in Table 4-5.
The numerical results for the incomplete Lucas, incomplete Pell-Lucas, and incomplete Jacobsthal-Lucas numbers are displayed in Table 6.

\begin{table}[h!]
	\centering
	\caption{Incomplete Chebyshev numbers of the first and second kind }
	
	\begin{equation*}
	\begin{tabular}{c|rrrrrc|rrrrr}
	&  &\multicolumn{4}{l}{$\mathcal{T}_{n}^{k}(1,-1,1)$}  & &&\multicolumn{4}{l}{$\:\mathcal{U}_{n}^{k}(1,-1,1)$} \\[10pt]
	$n/k$ & $0$ & $1$ & $2$ & $3$ & $4$ && $0$ & $1$ & $2$ & $3$ & $4$ \\ \hline\hline
	$1$ & $1$ &  &  &  & & & $2$ &  &  &  &   \\ 
	$2$ & $2$ & $1$ &  &  & & 	& $4$ & $3$ &  &  &   \\ 
	$3$ & $4$ & $1$ &  &  & &  & $8$ & $4$ &  &  &    \\ 
	$4$ & $8$ & $0$ & $1$ &  & && $16$ & $4$ & $5$ &  &     \\ 
	$5$ & $16$ & $-4$ & $1$ &  & &	& $32$ & $0$ & $6$ &  &    \\ 
	$6$ & $32$ & $-16$ & $2$ & $1$ & && $64$ & $-16$ & $8$ & $7$ &     \\ 
	$7$ & $64$ & $-48$ & $8$ & $1$ & && $128$ & $-64$ & $16$ & $8$ &     \\ 
	$8$ & $128$ & $-128$ & $32$ & $0$ & $1$ && $256$ & $-192$ & $48$ & $8$ & $9$    \\ 
	$9$ & $256$ & $-320$ & $112$ & $-8$ & $1$  && $512$ & $-512$ & $160$ & $0$ & $10$  
	\end{tabular}
	\end{equation*}
\end{table}

\bigskip 
\begin{table}[h!]
	\centering
	\caption{Incomplete Fibonacci numbers, incomplete Pell numbers and incomplete Jacobsthal numbers }
	
	\begin{equation*}
	\begin{tabular}{c|lllllc|lllllc|lllll}
	&  & \multicolumn{4}{l}{$\:\mathcal{U}_{n}^{k}(\frac{1}{2},1,1)=F_{n+1}^{(k)}$}& &  & \multicolumn{4}{l}{$\:\mathcal{U}_{n}^{k}(1,1,1)=P_{n+1}^{(k)}$} & 	&  & \multicolumn{4}{l}{$\:\mathcal{U}_{n}^{k}(\frac{1}{2},2,1)=J_{n+1}^{(k)}$}\\[10pt]
	$n/k$ & $0$ & $1$ & $2$ & $3$ & $4$ & & $0$ & $1$ & $2$ & $3$ & $4$ & & $0$ & $1$ & $2$ & $3$ & $4$ \\ \hline\hline
	$1$ & $1$ &  &  &  & & & $2$ &  &  &  & &	& $1$ &  &  &  &   \\ 
	$2$ & $1$ & $2$ &  &  & & 	& $4$ & $5$ &  &  & && $1$ & $3$ &  &  & \\ 
	$3$ & $1$ & $3$ &  &  & & 	& $8$ & $12$ &  &  & & & $1$ & $5$ &  &  &   \\ 
	$4$ & $1$ & $4$ & $5$ &  & & & $16$ & $28$ & $29$ &  & & & $1$ & $7$ & $11$ &  &   \\ 
	$5$ & $1$ & $5$ & $8$ &  & & & $32$ & $64$ & $70$ &  & & & $1$ & $9$ & $21$ &  &   \\ 
	$6$ & $1$ & $6$ & $12$ & $13$ & && $64$ & $144$ & $168$ & $169$ & & & $1$ & $114$ & $35$ & $43$ & \\ 
	$7$ & $1$ & $7$ & $17$ & $21$ & & & $128$ & $320$ & $400$ & $408$ & & & $1$ & $13$ & $53$ & $85$ & \\ 
	$8$ & $1$ & $8$ & $23$ & $33$ & $34$ && $256$ & $704$ & $944$ & $984$ & $985$& & $1$ & $15$ & $75$ & $155$ & $171$ \\ 
	$9$ & $1$ & $9$ & $30$ & $50$ & $55$ &	& $512$ & $1536$ & $2208$ & $2368$ & $2378$ & & $1$ & $17$ & $101$ & $261$ & $341$
	\end{tabular}
	\end{equation*}
\end{table}

\bigskip 
\begin{table}[h!]
	\centering
	\caption{Incomplete Lucas numbers, incomplete Pell-Lucas numbers and incomplete Jacobsthal-Lucas numbers }
	\begin{equation*}
	\begin{tabular}{c|lllllc|lllllc|lllll}
	&  & \multicolumn{4}{l}{$2\mathcal{T}_{n}^{k}(\frac{1}{2},1,1)=L_{n}^{(k)}$}& &  & \multicolumn{4}{l}{$2\mathcal{T}_{n}^{k}(1,1,1)=Q_{n}^{(k)}$}& &  & \multicolumn{4}{l}{$2\mathcal{T}_{n}^{k}(\frac{1}{2},2,1)=j_{n}^{(k)}$} \\[10pt]
	$n/k$ & $0$ & $1$ & $2$ & $3$ & $4$ & & $0$ & $1$ & $2$ & $3$ & $4$   & & $0$ & $1$ & $2$ & $3$ & $4$   \\ \hline\hline
	$1$ & $1$ &  &  &  & & & $2$ &  &  &  &    &  & $1$ &  &  &  &   \\ 
	$2$ & $1$ & $3$ &  &  & &   & $4$ & $6$ &  &  &  & & $1$ & $5$ &  &  &   \\ 
	$3$ & $1$ & $4$ &  &  &  &    & & $8$ & $14$ &  &  & 	& $1$ & $7$ &  &  &  \\ 
	$4$ & $1$ & $5$ & $7$ &  & &  & $16$ & $32$ & $34$ &  &   & & $1$ & $9$ & $17$ &  &    \\ 
	$5$ & $1$ & $6$ & $11$ &  &  &  & $32$ & $72$ & $82$ &  &  & & $1$ & $11$ & $31$ &  &   \\ 
	$6$ & $1$ & $7$ & $16$ & $18$ & & & $64$ & $160$ & $196$ & $198$ &   &  & $1$ & $13$ & $49$ & $65$ &  \\ 
	$7$ & $1$ & $8$ & $22$ & $29$ &  &  & $128$ & $352$ & $464$ & $478$ &  & & $1$ & $15$ & $71$ & $127$ &  \\ 
	$8$ & $1$ & $9$ & $29$ & $45$ & $47$ &  	& $256$ & $768$ & $1088$ & $1152$ & $1154$   & & $1$ & $17$ & $97$ & $225$ & $257$  \\ 
	$9$ & $1$ & $10$ & $37$ & $67$ & $76$& & $512$ & $1664$ & $2528$ & $2768$ & $2786$    &    	& $1$ & $19$ & $127$ & $367$ & $511$
	\end{tabular}
	\end{equation*}
\end{table}

In Figures 1, 2 the graphs of the $q$-Chebyshev polynomials of first and second kind for $s=-1$,\ $q=-0.5,\ 0.5,\ 0.9, \ 0.9999$,  $n=0,1,2,3,4,5$ and $-1\leq x \leq 1$ are shown.

\begin{center}
\begin{figure}[h!]

	\begin{tikzpicture}[scale=1.0]
	\begin{axis}[
	axis lines = center,
	ytick={-1,-0.5,0.5,1},
	xtick={-1,-0.5,0,0.5,1},
	xticklabels={-1,-0.5,0,0.5,1},
	]
	\addplot [
	domain=-1.299:1.3,
	range=-1.6..1.6, 
	samples=300, 
	color=white,
	]
	{1.2};
	\addplot [
	domain=-1:1, 
	range=-1.6..1.6,
	samples=300, 
	color=red, thick,
	]
	{1};
	\addplot [
	domain=-1:1,
	range=-1.6..1.6, 
	samples=300, 
	color=yellow, thick,
	]
	{x};
	\addplot [
	domain=-1:1,
	range=-1.6..1.6, 
	samples=300, 
	color=green, thick,
	]
	{.5*x^2+.5};
	\addplot [
	domain=-1:1,
	range=-1.6..1.6, 
	samples=299, 
	color=blue, thick,
	]
	{.625*x^3+.375*x};
	\addplot [
	domain=-1:1,
	range=-1.6..1.6, 
	samples=299, 
	color=purple, thick,
	]
	{.546875*x^4+.390625*x^2+.0625};     
	\addplot [
	domain=-1:1,
	range=-1.6..1.6, 
	samples=299, 
	color=brown, thick,
	]
	{.5810546875*x^5+.3759765625*x^3+.04296875*x};  
	\end{axis}
	\draw[very thick] (3.3,-0.8) node {  $q=-0.5 $};
	\end{tikzpicture}
	\qquad
	\begin{tikzpicture}[scale=1.0]
	\begin{axis}[
	axis lines = center,
	ytick={-1,-0.5,0.5,1},
	xtick={-1,-0.5,0,0.5,1},
	xticklabels={-1,-0.5,0,0.5,1},
	]
	\addplot [
	domain=-1.299:1.3,
	range=-1.6..1.6, 
	samples=300, 
	color=white, thick,
	]
	{1.2};
	\addplot [
	domain=-1:1,
	range=-1.6..1.6,
	samples=300, 
	color=red, thick,
	]
	{1};
	\addplot [
	domain=-1:1,
	range=-1.6..1.6, 
	samples=300, 
	color=yellow, thick,
	]
	{x};
	\addplot [
	domain=-1:1,
	range=-1.6..1.6, 
	samples=300, 
	color=green, thick,
	]
	{1.5*x^2-.5};
	\addplot [
	domain=-1:1,
	range=-1.6..1.6, 
	samples=299, 
	color=blue, thick,
	]
	{1.875*x^3-.875*x};
	\addplot [
	domain=-1:1,
	range=-1.6..1.6, 
	samples=299, 
	color=purple, thick,
	]
	{2.109375*x^4-1.171875*x^2+.0625};     
	\addplot [
	domain=-1:1,
	range=-1.6..1.6, 
	samples=299, 
	color=brown, thick,
	]
	{2.241210938*x^5-1.362304688*x^3+.12109375*x};  
	\end{axis}
	\draw[very thick] (3.3,-0.8) node {  $q=0.5 $};
	\end{tikzpicture}
	\\
	\vskip 5pt
	\begin{tikzpicture}[scale=1.0]
	\begin{axis}[
	axis lines = center,
	ytick={-1,-0.5,0.5,1},
	xtick={-1,-0.5,0,0.5,1},
	xticklabels={-1,-0.5,0,0.5,1},
	]
	\addplot [
	domain=-1.299:1.3,
	range=-1.6..1.6, 
	samples=300, 
	color=white,  thick,
	]
	{1.2};
	\addplot [
	domain=-1:1,
	range=-1.6..1.6,
	samples=300, 
	color=red, thick,
	]
	{1};
	\addplot [
	domain=-1:1,
	range=-1.6..1.6, 
	samples=300, 
	color=yellow, thick,
	]
	{x};
	\addplot [
	domain=-1:1,
	range=-1.6..1.6, 
	samples=300, 
	color=green, thick,
	]
	{1.9*x^2-.9};
	\addplot [
	domain=-1:1,
	range=-1.6..1.6, 
	samples=299, 
	color=blue, thick,
	]
	{3.439*x^3-2.439*x};
	\addplot [
	domain=-1:1,
	range=-1.6..1.6, 
	samples=299, 
	color=purple, thick,
	]
	{5.946031*x^4-5.602131*x^2+.6561};     
	\addplot [
	domain=-1:1,
	range=-1.6..1.6, 
	samples=299, 
	color=brown, thick,
	]
	{9.847221939*x^5-11.53401705*x^3+2.68679511*x};  
	\end{axis}
	\draw[very thick] (3,-0.8) node {  $q=0.9 $};
	\end{tikzpicture}
	\qquad
	\begin{tikzpicture}[scale=1.0]
	\begin{axis}[
	axis lines = center,
	ytick={-1,-0.5,0.5,1},
	xtick={-1,-0.5,0,0.5,1},
	xticklabels={-1,-0.5,0,0.5,1},
	]
	\addplot [
	domain=-1.299:1.3, 
	samples=300, 
	color=white, thick,
	]
	{1.2};
	\addplot [
	domain=-1:1, 
	range=-1.6..1.6,
	samples=300, 
	color=red, thick,
	]
	{1};
	\addplot [
	domain=-1:1,
	range=-1.6..1.6, 
	samples=300, 
	color=yellow, thick,
	]
	{x};
	\addplot [
	domain=-1:1,
	samples=300, 
	color=green, thick,
	]
	{1.9999*x^2-.9999};
	\addplot [
	domain=-1:1, 
	range=-1.6..1.6,
	samples=299, 
	color=blue, thick,
	]
	{3.999400040*x^3-2.999400040*x};
	\addplot [
	domain=-1:1,
	range=-1.6..1.6, 
	samples=299, 
	color=purple, thick,
	]
	{7.997600380*x^4-7.997200440*x^2+.9996000600};     
	\addplot [
	domain=-1:1, 
	samples=299, 
	color=brown, thick,
	]
	{15.99200220*x^5-19.98900300*x^3+4.997000800*x};  
	\end{axis}
	\draw[very thick] (8,2) node {\tiny {\color{red}{\textbf{---}}  \ \ $\mathcal{T}_{0}(x,s,q) \ $}};
	\draw[very thick] (8,1.6) node {\tiny {\color{yellow}{\textbf{---}}  \ \ $\mathcal{T}_{1}(x,s,q) $}};
	\draw[very thick] (8,1.2) node {\tiny {\color{green}{\textbf{---}}  \ \ $\mathcal{T}_{2}(x,s,q) $}};  
	\draw[very thick] (8,0.8) node {\tiny {\color{blue}{\textbf{---}} \ \ $\mathcal{T}_{3}(x,s,q) $}};
	\draw[very thick] (8,0.4) node {\tiny {\color{purple}{\textbf{---}} \ \ $\mathcal{T}_{4}(x,s,q) $}};
	\draw[very thick] (8,0.0) node {\tiny {\color{brown}{\textbf{---}} \ \ $\mathcal{T}_{5}(x,s,q) $}};
	\draw[very thick] (3,-0.8) node {  $q=0.9999 $};
	\end{tikzpicture}
	\caption{Graphs of $\mathcal{T}_{n}(x,s,q)$  for \ $s=-1$,\ $q=-0.5,0.5,0.9, 0.9999$, \ $n=0,1,2,3,4,5$ }
	\vskip 5pt
\end{figure}
\end{center}
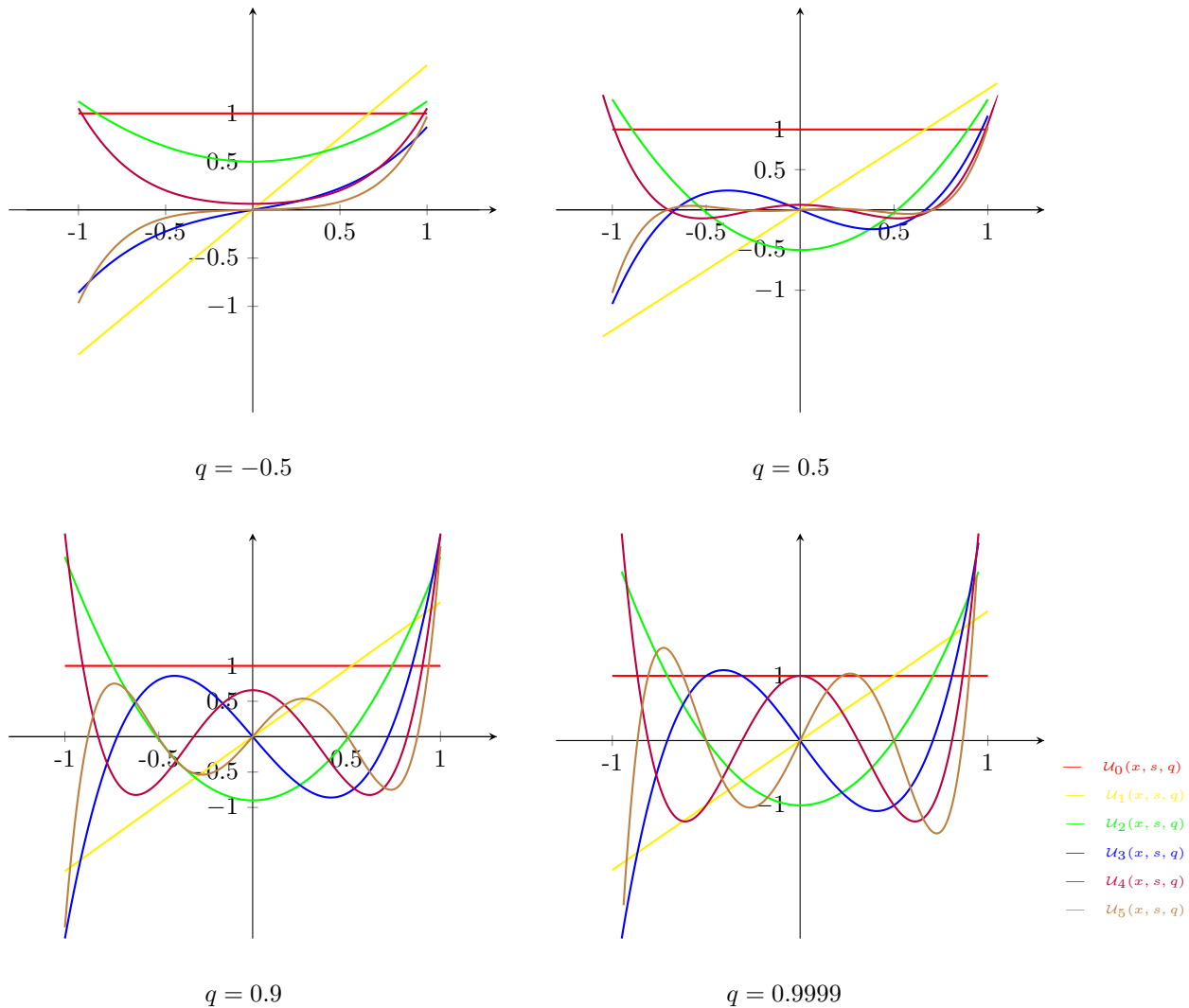
\begin{figure}[h!]
	\begin{tikzpicture}[scale=1.0]
	\begin{axis}[
	axis lines = center,
	ytick={-1,-0.5,0.5,1},
	xtick={-1,-0.5,0,0.5,1},
	xticklabels={-1,-0.5,0,0.5,1},
	]
	\addplot [
	domain=-1.299:1.3, 
	samples=300, 
	color=black,
	]
	{0};
	\addplot [
	domain=-1:1, 
	range=-1.6..1.6,
	samples=300, 
	color=red, thick,
	]
	{1};
	\addplot [
	domain=1:1.4,
	range=-1.6..1.6, 
	samples=300, 
	color=white, thin,
	]
	{1.5*x};
	\addplot [
	domain=-1.4:1,
	range=-1.6..1.6, 
	samples=300, 
	color=white, thin,
	]
	{1.5*x};
	\addplot [
	domain=-1:1,
	range=-1.6..1.6, 
	samples=300, 
	color=yellow, thick,
	]
	{1.5*x};
	\addplot [
	domain=-1:1, 
	samples=300, 
	color=green, thick,
	]
	{.625*x^2+.5};
	\addplot [
	domain=-1:1, 
	range=-1.6..1.6,
	samples=299, 
	color=blue, thick,
	]
	{.546875*x^3+.3125*x};
	\addplot [
	domain=-1:1,
	range=-1.6..1.6, 
	samples=299, 
	color=purple, thick,
	]
	{.5810546875*x^4+.41015625*x^2+.0625};     
	\addplot [
	domain=-1:1, 
	samples=299, 
	color=brown, thick,
	]
	{.5628967285*x^5+.3631591797*x^3+.041015625*x};  
	\end{axis}
	\draw[very thick] (3.3,-0.8) node {  $q=-0.5 $};
	\end{tikzpicture}
	\qquad
	\begin{tikzpicture}[scale=1.0]
	\begin{axis}[
	axis lines = center,
	ytick={-1,-0.5,0.5,1},
	xtick={-1,-0.5,0,0.5,1},
	xticklabels={-1,-0.5,0,0.5,1},
	]
	\addplot [
	domain=-1.299:1.3,
	range=-1.6..1.6, 
	samples=300, 
	color=black,
	]
	{0};
	\addplot [
	domain=-1:1, 
	range=-1.6..1.6,
	samples=300, 
	color=red, thick,
	]
	{1};
	\addplot [
	domain=-1.05:1.05,
	range=-1.6..1.6, 
	samples=300, 
	color=yellow, thick,
	]
	{1.5*x};
	\addplot [
	domain=-1:1,
	range=-1.6..1.6, 
	samples=300, 
	color=green, thick,
	]
	{1.875*x^2-.5};
	\addplot [
	domain=-1:1,
	range=-1.6..1.6, 
	samples=299, 
	color=blue, thick,
	]
	{2.109375*x^3-.9375*x};
	\addplot [
	domain=-1.05:1.05,
	range=-1.6..1.6, 
	samples=299, 
	color=purple, thick,
	]
	{2.241210938*x^4-1.23046875*x^2+.0625};     
	\addplot [
	domain=-1:1,
	range=-1.6..1.6, 
	samples=299, 
	color=brown, thick,
	]
	{2.311248780*x^5-1.400756836*x^3+.123046875*x};  
	
	\addplot [
	domain=-1.2:-1,
	range=-1.6..1.6, 
	samples=299, 
	color=white, thick,
	]
	{2.109375*x^3-.9375*x};
	\addplot [
	domain=1:1.2,
	range=-1.6..1.6, 
	samples=299, 
	color=white, thick,
	]
	{2.109375*x^3-.9375*x};
	\end{axis}
	\draw[very thick] (3.3,-0.8) node {  $q=0.5 $};
	\end{tikzpicture}
	\\
	\vskip 5pt
	\begin{tikzpicture}[scale=1.0]
	\begin{axis}[
	axis lines = center,
	ytick={-1,-0.5,0.5,1},
	xtick={-1,-0.5,0,0.5,1},
	xticklabels={-1,-0.5,0,0.5,1},
	]
	\addplot [
	domain=-1.299:1.3,
	range=-1.6..1.6, 
	samples=300, 
	color=black,
	]
	{0};
	\addplot [
	domain=-1:1, 
	range=-1.6..1.6,
	samples=300, 
	color=red, thick,
	]
	{1};
	\addplot [
	domain=-1:1,
	range=-1.6..1.6, 
	samples=300, 
	color=yellow, thick,
	]
	{1.9*x};
	\addplot [
	domain=-1:1,
	range=-1.6..1.6, 
	samples=300, 
	color=green, thick,
	]
	{3.439*x^2-.9};
	\addplot [
	domain=-1:1,
	range=-1.6..1.6, 
	samples=299, 
	color=blue, thick,
	]
	{5.946031*x^3-3.0951*x};
	\addplot [
	domain=-1:1,
	range=-1.6..1.6, 
	samples=299, 
	color=purple, thick,
	]
	{9.847221939*x^4-7.63282611*x^2+.6561};     
	\addplot [
	domain=-1:1,
	range=-1.6..1.6, 
	samples=299, 
	color=brown, thick,
	]
	{15.66190802*x^5-16.04112454*x^3+3.074215599*x};  
	\end{axis}
	\draw[very thick] (3.3,-0.8) node {   $q=0.9 $};
	\end{tikzpicture}
	\qquad
	\begin{tikzpicture}[scale=1.0]
	\begin{axis}[
	axis lines = center,
	ytick={-1,0,1},
	xtick={-1,0,1},
	xticklabels={-1,0,1},
	]
	\addplot [
	domain=-1.299:1.3,
	range=-1.6..1.6, 
	samples=300, 
	color=black,
	]
	{0};
	\addplot [
	domain=-1:1, 
	range=-1.6..1.6,
	samples=300, 
	color=red, thick,
	]
	{1};
	\addplot [
	domain=-1:1,
	range=-1.6..1.6, 
	samples=300, 
	color=yellow, thick,
	]
	{1.9999*x};
	\addplot [
	domain=-0.95:0.95,
	range=-1.6..1.6, 
	samples=300, 
	color=green, thick,
	]
	{3.999400040*x^2-.9999};
	\addplot [
	domain=-0.95:0.95,
	range=-1.6..1.6, 
	samples=299, 
	color=blue, thick,
	]
	{7.997600380*x^3-3.999000100*x};
	\addplot [
	domain=-0.95:0.95,
	range=-1.6..1.6, 
	samples=299, 
	color=purple, thick,
	]
	{15.99200220*x^4-11.99460118*x^2+.9996000600};     
	\addplot [
	domain=-0.94:0.94,
	range=-1.6..1.6, 
	samples=299, 
	color=brown, thick,
	]
	{31.97601000*x^5-31.97760808*x^3+5.996101160*x};  
	\end{axis}
	\draw[very thick] (8,2.4) node {\tiny {\color{red}{\textbf{---}}  \ \ $\:\mathcal{U}_{0}(x,s,q) \ $}};
	\draw[very thick] (8,2.0) node {\tiny {\color{yellow}{\textbf{---}}  \ \ $\:\mathcal{U}_{1}(x,s,q) $}};
	\draw[very thick] (8,1.6) node {\tiny {\color{green}{\textbf{---}}  \ \ $\:\mathcal{U}_{2}(x,s,q) $}};  
	\draw[very thick] (8,1.2) node {\tiny {\color{blue}{\textbf{---}} \ \ $\:\mathcal{U}_{3}(x,s,q) $}};
	\draw[very thick] (8,0.8) node {\tiny {\color{purple}{\textbf{---}} \ \ $\:\mathcal{U}_{4}(x,s,q) $}};
	\draw[very thick] (8,0.4) node {\tiny {\color{brown}{\textbf{---}} \ \ $\:\mathcal{U}_{5}(x,s,q) $}};
	\draw[very thick] (3.2,-0.8) node {   $q=0.9999 $};
	\end{tikzpicture}
	\vskip 5pt
	\caption{Graphs of  $\:\mathcal{U}_{n}(x,s,q)$\ for $s=-1$,\ $q=-0.5,0.5,0.9, 0.9999$, \ $n=0,1,2,3,4,5$ }
\end{figure}

In Figure 3 the graphs of the incomplete $q$-Chebyshev polynomials of second kind $\:\mathcal{U}_9^k(x,s,q)$ for $s=-1$,\ $q=-0.9, -0.5, 05, 0.9$,  $k=0,1,2,3,4$ are shown.

\begin{figure}[H]
	\begin{tikzpicture}[scale=1.0]
	\begin{axis}[
	axis lines = center,
	xtick={-2,-1.5,-1,-0.5,0,0.5,1,1.5,2},
	xticklabels={-2,-1.5,-1,-0.5,0,0.5,1,1.5,2},
	]
	\addplot [
	domain=-1.1:1.1, 
	samples=300, 
	color=black,
	]
	{0};
	\addplot [
	domain=-1:1,  
	samples=300, 
	color=black,
	thick,
	]
	{.0328965180*x^9+.1025671222*x^7+.1848556942*x^5+.1652186736*x^3+.06352168424*x};
	\addplot [
	domain=-1:1,  
	samples=300, 
	color=green,
	]
	{.02328965180*x^9+.1025671222*x^7+.1848556942*x^5+.1652186736*x^3};
	\addplot [
	domain=-1:1,  
	samples=300, 
	color=purple,
	]
	{.02328965180*x^9+.1025671222*x^7+.1848556942*x^5};    
	\addplot [
	domain=-1:1,   
	samples=300, 
	color=blue,
	]
	{.02328965180*x^9+.1025671222*x^7};    
	\addplot [
	domain=-1:1,  
	samples=300, 
	color=red,
	]
	{.02328965180*x^9};
	\end{axis}	
	\draw[very thick] (3.3,-0.8) node {  $q=-0.9 $};

	\end{tikzpicture}
	\qquad
	\begin{tikzpicture}[scale=1.0]
	\begin{axis}[
	axis lines = center,
	xtick={-2,-1.5,-1,-0.5,0,0.5,1,1.5,2},
	xticklabels={-2,-1.5,-1,-0.5,0,0.5,1,1.5,2},
	]
	\addplot [
	domain=-1:1.2, 
	samples=300, 
	color=black,
	]
	{0};
	\addplot [
	domain=-1:1, 
	samples=300, 
	color=black,
	thick,
	]
	{.8683291812*x^9+.3781446608*x^7+.05001999594*x^5+.001535305637*x^3+.00001016259192*x};
	\addplot [
	domain=-1:1, 
	samples=300, 
	color=green,
	]
	{.7683291812*x^9+.3781446608*x^7+.05001999594*x^5+.001535305637*x^3};
	\addplot [
	domain=-1:1, 
	samples=300, 
	color=purple,
	]
	{.6683291812*x^9+.3781446608*x^7+.05001999594*x^5};    
	\addplot [
	domain=-0.95:0.95, 
	samples=300, 
	color=blue,
	]
	{.5683291812*x^9+.3781446608*x^7};    
	\addplot [
	domain=-1:1, 
	samples=300, 
	color=red,
	]
	{.5683291812*x^9};
	\end{axis}	
	\draw[very thick] (3.3,-0.8) node {  $q=-0.5 $};
	
	\end{tikzpicture}
	\\
	\vskip 5pt
	\begin{tikzpicture}[scale=1.0]
	\begin{axis}[
	axis lines = center,
	xtick={-2,-1.5,-1,-0.5,0,0.5,1,1.5,2},
	xticklabels={-2,-1.5,-1,-0.5,0,0.5,1,1.5,2},
	]
	\addplot [
	domain=-1:1.2, 
	samples=300, 
	color=black,
	]
	{0};
	\addplot [
	domain=-1:1, 
	samples=300, 
	color=black,
	thick,
	]
	{2.679580385*x^9-1.577109807*x^7+.2053816423*x^5-.005921893170*x^3+.00003048777579*x};
	\addplot [
	domain=-1:1, 
	samples=300, 
	color=green,
	]
	{2.479580385*x^9-1.577109807*x^7+.2053816423*x^5-.005921893170*x^3};
	\addplot [
	domain=-1:1, 
	samples=300, 
	color=purple, 
	]
	{2.379580385*x^9-1.577109807*x^7+.2053816423*x^5};    
	\addplot [
	domain=-0.95:0.95, 
	samples=300, 
	color=blue, 
	]
	{2.379580385*x^9-1.577109807*x^7};    
	\addplot [
	domain=-1:1, 
	samples=300, 
	color=red,
	]
	{2.379580385*x^9};
	
	\end{axis}
	\draw[very thick] (3.3,-0.8) node {  $q=0.5 $};
	\end{tikzpicture}
	\qquad
	\begin{tikzpicture}[scale=1.0]
	\begin{axis}[
	axis lines = center,
	xtick={-2,-1.5,-1,-0.5,0,0.5,1,1.5,2},
	xticklabels={-2,-1.5,-1,-0.5,0,0.5,1,1.5,2},
	]
	\addplot [
	domain=-1:1.7, 
	samples=300, 
	color=black,
	]
	{0};
	\addplot [
	domain=-1.6:1.6,
	samples=300, 
	color=black,
	thick,
	]
	{76.37082070*x^9-136.8334111*x^7+87.03192062*x^5-20.02803928*x^3+1.206912000*x};
	\addplot [
	domain=-1.6:1.6,  
	samples=300, 
	color=green, 
	]
	{74.37082070*x^9-136.8334111*x^7+87.03192062*x^5-20.02803928*x^3};
	\addplot [
	domain=-1.6:1.6,  
	samples=300, 
	color=purple, 
	]
	{70.37082070*x^9-136.8334111*x^7+87.03192062*x^5};    
	\addplot [
	domain=-1.6:1.6, 
	samples=300, 
	color=blue, 
	]
	{70.37082070*x^9-136.8334111*x^7};    
	\addplot [
	domain=-1.6:1.6,  
	samples=300, 
	color=red, 
	]
	{70.37082070*x^9};
	
	\end{axis}
	\draw[very thick] (3.3,-0.8) node {  $q=0.9 $};
	\draw[very thick] (8,2.2) node {\tiny {\color{red}{\textbf{---}}  \ \ $\:\mathcal{U}_{9}^{0}(x,s,q) \ $}};
	\draw[very thick] (8,1.7) node {\tiny {\color{blue}{\textbf{---}}  \ \ $\:\mathcal{U}_{9}^{1}(x,s,q) $}};
	\draw[very thick] (8,1.2) node {\tiny {\color{purple}{\textbf{---}}  \ \ $\:\mathcal{U}_{9}^{2}(x,s,q) $}};  
	\draw[very thick] (8,0.7) node {\tiny {\color{green}{\textbf{---}} \ \ $\:\mathcal{U}_{9}^{3}(x,s,q) $}}; \ \
	\draw[very thick] (8,0.2) node {\tiny {\color{black}{\textbf{---}} \ \ $\:\mathcal{U}_{9}^{4}(x,s,q) $}};
	\end{tikzpicture}
	\vskip 5pt
	\caption{Graphs of  $\:\mathcal{U}_{9}^{k}(x,s,q)$ for \ $s=-1$, \ $q=-0.9,-0.5,0.5,0.9$,\  $k=0,1,2,3,4$ }
\end{figure}
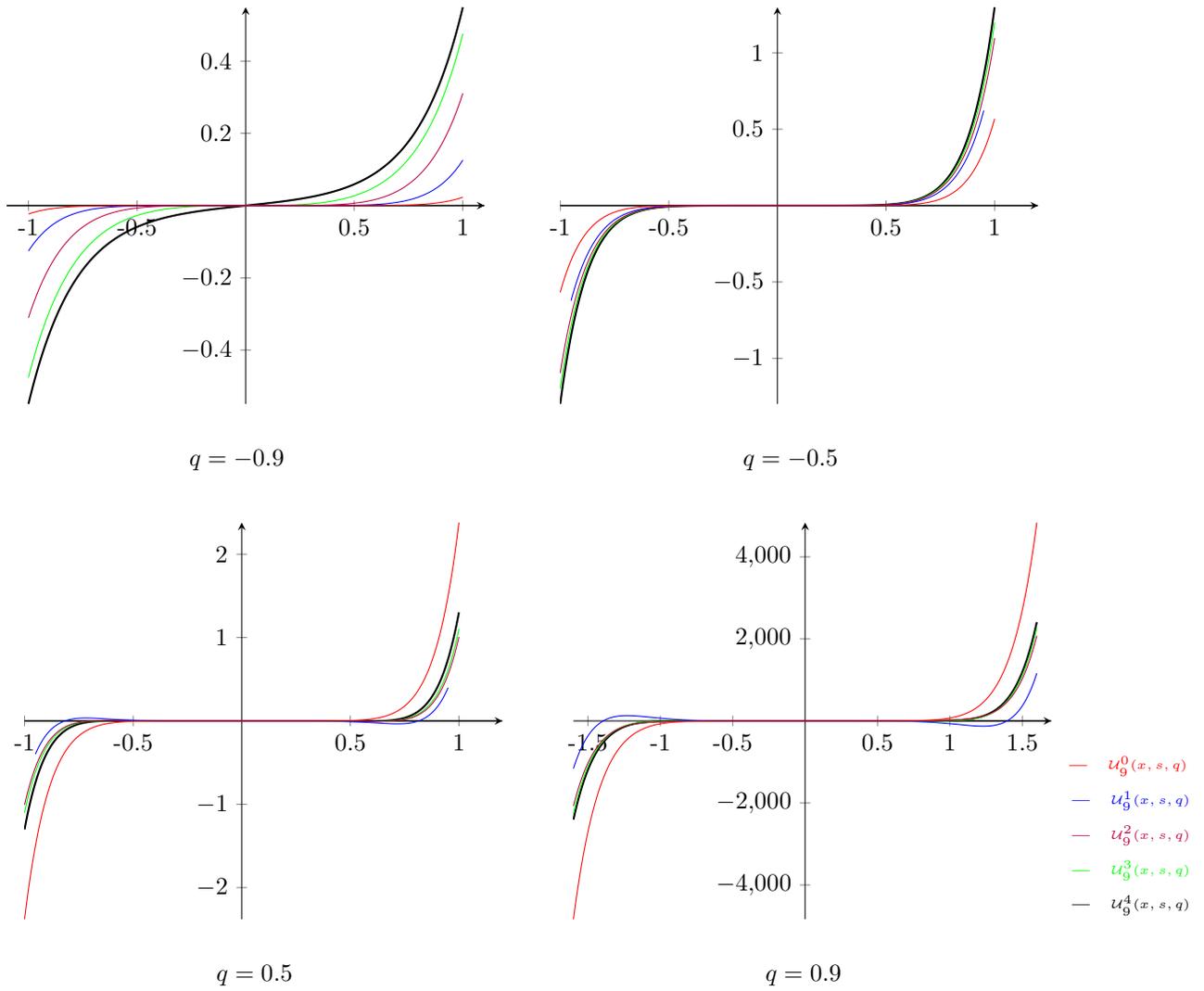

In Figure 4 the graphs of the incomplete Lucas polynomials $\mathcal{T}_5^k(\tfrac{x}{2},s,q)$ for $s=1$,\ $q=-0.9, -0.5, 05, 0.9$,  $k=0,1,2$ are shown.

\begin{figure}[h!]

	\begin{tikzpicture}[scale=0.9]
	\begin{axis}[
	axis lines = center,
	]
	\addplot [
	domain=-6:6.2,
	samples=300, 
	color=black,
	]
	{0};
	\addplot [
	domain=-6:6,  
	samples=300, 
	color=black,
	thick,
	]
	{.002538542534*x^5-.04619316612*x^3+.2746106550*x};
	\addplot [
	domain=-6:6,
	samples=300, 
	color=blue,
	]
	{.002538542534*x^5-.04619316612*x^3};    
	\addplot [
	domain=-6:6, 
	samples=300, 
	color=red,
	]
	{.002538542534*x^5};
	
	\end{axis}
	\draw[very thick] (3.3,-0.8) node {  $q=-0.9 $};
	\end{tikzpicture}
	\qquad 
	\begin{tikzpicture}[scale=0.9]
	\begin{axis}[
	axis lines = center,
	]
	\addplot [
	domain=-2:2.2,
	samples=300, 
	color=black,
	]
	{0};
	\addplot [
	domain=-2:2, 
	samples=300, 
	color=black,
	thick,
	]
	{.01815795898*x^5-.04699707030*x^3+.02148437500*x};
	\addplot [
	domain=-2:2,
	samples=300, 
	color=blue,
	]
	{.01815795898*x^5-.04699707030*x^3};    
	\addplot [
	domain=-2:2, 
	samples=300, 
	color=red,
	]
	{.01815795898*x^5};
	
	\end{axis}
	\draw[very thick] (3.3,-0.8) node {  $q=-0.5 $};
	\end{tikzpicture}
	\\
	\vskip 5pt
	\begin{tikzpicture}[scale=0.9]
	\begin{axis}[
	axis lines = center,
	]
	\addplot [
	domain=-2:2.2,
	samples=300, 
	color=black,
	]
	{0};
	\addplot [
	domain=-2:2, 
	samples=300, 
	color=black,
	thick,
	]
	{.07003784181*x^5+.1702880860*x^3+.06054687500*x};
	\addplot [
	domain=-2:2,
	samples=300, 
	color=blue,
	]
	{.07003784181*x^5+.1702880860*x^3};    
	\addplot [
	domain=-2:2, 
	samples=300, 
	color=red,
	]
	{.07003784181*x^5};
	
	\end{axis}
	\draw[very thick] (3.3,-0.8) node {  $q=0.5 $};
	\end{tikzpicture}
	\qquad
	\begin{tikzpicture}[scale=0.9]
	\begin{axis}[
	axis lines = center,
	]
	\addplot [
	domain=-2:2.2,
	samples=300, 
	color=black,
	]
	{0};
	\addplot [
	domain=-2:2, 
	samples=300, 
	color=black,
	thick,
	]
	{.3077256856*x^5+1.441752131*x^3+1.343397555*x};
	\addplot [
	domain=-2:2,
	samples=300, 
	color=blue,
	]
	{.3077256856*x^5+1.441752131*x^3};    
	\addplot [
	domain=-2:2, 
	samples=300, 
	color=red,
	]
	{.3077256856*x^5};
	
	\end{axis}
	\draw[very thick] (8,1.8) node {\tiny {\color{red}{\textbf{---}}  \ \ $\mathcal{T}_{5}^{0}(x,s,q) \ $}};
	\draw[very thick] (8,1.2) node {\tiny {\color{blue}{\textbf{---}}  \ \ $\mathcal{T}_{5}^{1}(x,s,q) $}};
	\draw[very thick] (8,0.6) node {\tiny {\color{black}{\textbf{---}} \ \ $\mathcal{T}_{5}^{2}(x,s,q) $}};
	\draw[very thick] (3.3,-0.8) node {  $q=0.9 $};
	\end{tikzpicture}
	\vskip 5pt
	\caption{Graphs of  $\mathcal{T}_{5}^{k}(\frac{x}{2},s,q)$ for \ $s=1$,\ $q=-0.9,-0.5,0.5,0.9$, \  $k=0,1,2$  }
\end{figure}
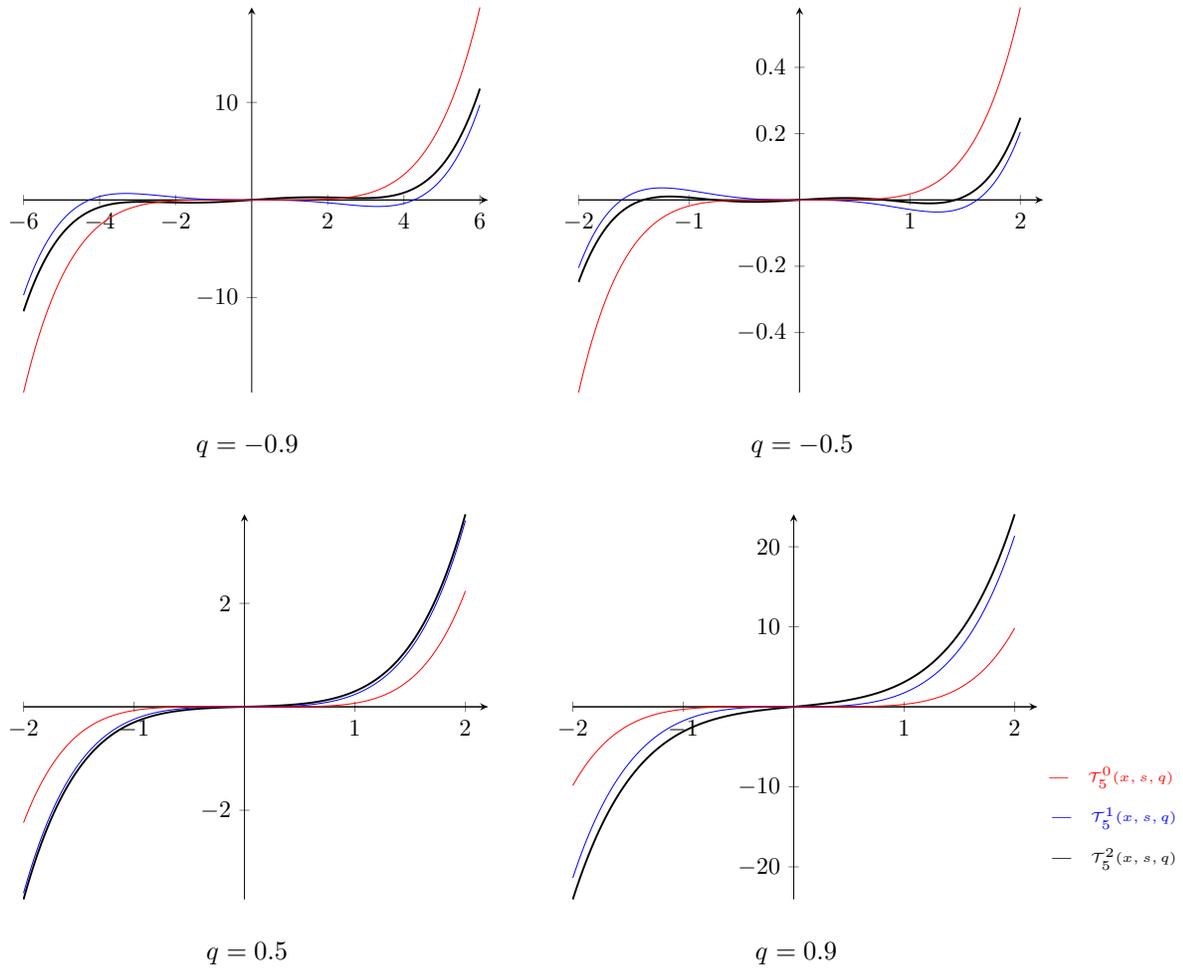

In Figure 5 the graphs of the incomplete Jacobsthal polynomials $\:\mathcal{U}_8^k(\tfrac{x}{2},s,q)$ for $s=2$,\ $q=-0.9, -0.5, 05, 0.9$,  $k=0,1,2,3,4$ are shown.

\begin{figure}[h!]

	\begin{tikzpicture}[scale=0.8]
	\begin{axis}[
	axis lines = center,
	]
	\addplot [
	domain=-8:8.2, 
	samples=300, 
	color=black,
	]
	{0};
	\addplot [
	domain=-8:8,  
	samples=300, 
	color=black,
	thick,
	]
	{.0001038203901*x^8-.004294937688*x^6+.07564372025*x^4-.6962141485*x^2+2.964832302};
	\addplot [
	domain=-8:8,  
	samples=300, 
	color=green,
	]
	{.0001038203901*x^8-.004294937688*x^6+.07564372025*x^4-.6962141485*x^2};
	\addplot [
	domain=-8:8,  
	samples=300, 
	color=purple,
	]
	{.0001038203901*x^8-.004294937688*x^6+.07564372025*x^4};    
	\addplot [
	domain=-8:8, 
	samples=300, 
	color=blue,
	]
	{.0001038203901*x^8-.004294937688*x^6};    
	\addplot [
	domain=-8:8, 
	samples=300, 
	color=red,
	]
	{.0001038203901*x^8};
	
	\end{axis}
	\draw[very thick] (3.3,-0.8) node {  $q=-0.9 $};
	\end{tikzpicture}
	\qquad
	\begin{tikzpicture}[scale=0.8]
	\begin{axis}[
	axis lines = center,
	]
	\addplot [
	domain=-2.5:2.7, 
	samples=300, 
	color=black,
	]
	{0};
	\addplot [
	domain=-2.5:2.5, 
	samples=300, 
	color=black,
	thick,
	]
	{.0209277116512*x^8+.04890039106*x^6+.04974390265*x^4+.01036584377*x^2+.0002441406250};
	\addplot [
	domain=-2.5:2.5, 
	samples=300, 
	color=green,
	]
	{.009277116512*x^8+.04890039106*x^6+.04974390265*x^4+.01036584377*x^2};
	\addplot [
	domain=-1:1, 
	samples=300, 
	color=purple,
	]
	{.009277116512*x^8+.04890039106*x^6+.04974390265*x^4};    
	\addplot [
	domain=-2.5:2.5,
	samples=300, 
	color=blue,
	]
	{.009277116512*x^8+.04890039106*x^6};    
	\addplot [
	domain=-2.5:2.5, 
	samples=300, 
	color=red,
	]
	{.009277116512*x^8};
	
	\end{axis}
	\draw[very thick] (3.3,-0.8) node {  $q=-0.5 $};
	\end{tikzpicture}
	\\
	\vskip 5pt
	\begin{tikzpicture}[scale=0.8]
	\begin{axis}[
	axis lines = center,
	]
	\addplot [
	domain=-2.5:2.7, 
	samples=300, 
	color=black,
	]
	{0};
	\addplot [
	domain=-2.5:2.5, 
	samples=300, 
	color=black,
	thick,
	]
	{.1981274030*x^8+2.738223873*x^6+11.55308595*x^4+15.12770918*x^2+2.964832302};
	\addplot [
	domain=-1:1, 
	samples=300, 
	color=green,
	]
	{.1981274030*x^8+2.738223873*x^6+11.55308595*x^4+15.12770918*x^2};
	\addplot [
	domain=-2.5:2.5, 
	samples=300, 
	color=purple,
	]
	{.1981274030*x^8+2.738223873*x^6+11.55308595*x^4};    
	\addplot [
	domain=-2.5:2.5, 
	samples=300, 
	color=blue,
	]
	{.1981274030*x^8+2.738223873*x^6};    
	\addplot [
	domain=-2.5:2.5, 
	samples=300, 
	color=red,
	]
	{.1981274030*x^8};
	
	\end{axis}
	\draw[very thick] (3.3,-0.8) node {  $q=0.5 $};
	\end{tikzpicture}
	\qquad
	\begin{tikzpicture}[scale=0.8]
	\begin{axis}[
	axis lines = center,
	]
	\addplot [
	domain=-2.5:2.7, 
	samples=300, 
	color=black,
	]
	{0};
	\addplot [
	domain=-2.6:2.6, 
	samples=300, 
	color=black,
	thick,
	]
	{.0023380357*x^8-.01190952285*x^6+.01289656735*x^4-.003455281255*x^2+.0002441406250};
	\addplot [
	domain=-2.5:2.5, 
	samples=300, 
	color=green,
	]
	{0.002222380357*x^8-0.01190952285*x^6+0.01289656735*x^4-0.003455281255*x^2};
	\addplot [
	domain=-2.5:2.5,  
	samples=300, 
	color=purple,
	]
	{0.002224380357*x^8-0.01190952285*x^6+0.01289656735*x^4};    
	\addplot [
	domain=-2.5:2.5, 
	samples=300, 
	color=blue,
	]
	{0.002224380357*x^8-0.01190952285*x^6};    
	\addplot [
	domain=-2.5:2.5,  
	samples=300, 
	color=red,
	]
	{0.002224380357*x^8};
	
	\end{axis}
	\draw[very thick] (3.3,-0.8) node {  $q=0.9 $};
	\draw[very thick] (8,2.9) node {\tiny {\color{red}{\textbf{---}}  \ \ $\:\mathcal{U}_{8}^{0}(x,s,q) \ $}};
	\draw[very thick] (8,2.3) node {\tiny {\color{blue}{\textbf{---}}  \ \ $\:\mathcal{U}_{8}^{1}(x,s,q) $}};
	\draw[very thick] (8,1.7) node {\tiny {\color{purple}{\textbf{---}}  \ \ $\:\mathcal{U}_{8}^{2}(x,s,q) $}};  
	\draw[very thick] (8,1.1) node {\tiny {\color{green}{\textbf{---}} \ \ $\:\mathcal{U}_{8}^{3}(x,s,q) $}}; \ \
	\draw[very thick] (8,0.4) node {\tiny {\color{black}{\textbf{---}} \ \ $\:\mathcal{U}_{8}^{4}(x,s,q) $}};
	\end{tikzpicture}
	\vskip 5pt
	\caption{Graphs of $\:\mathcal{U}_{8}^{k}(\frac{x}{2},s,q)$ \ $s=2$, $q=-0.9,-0.5,0.5,0.9$,  $k=0,1,2,3,4$ }
\end{figure}
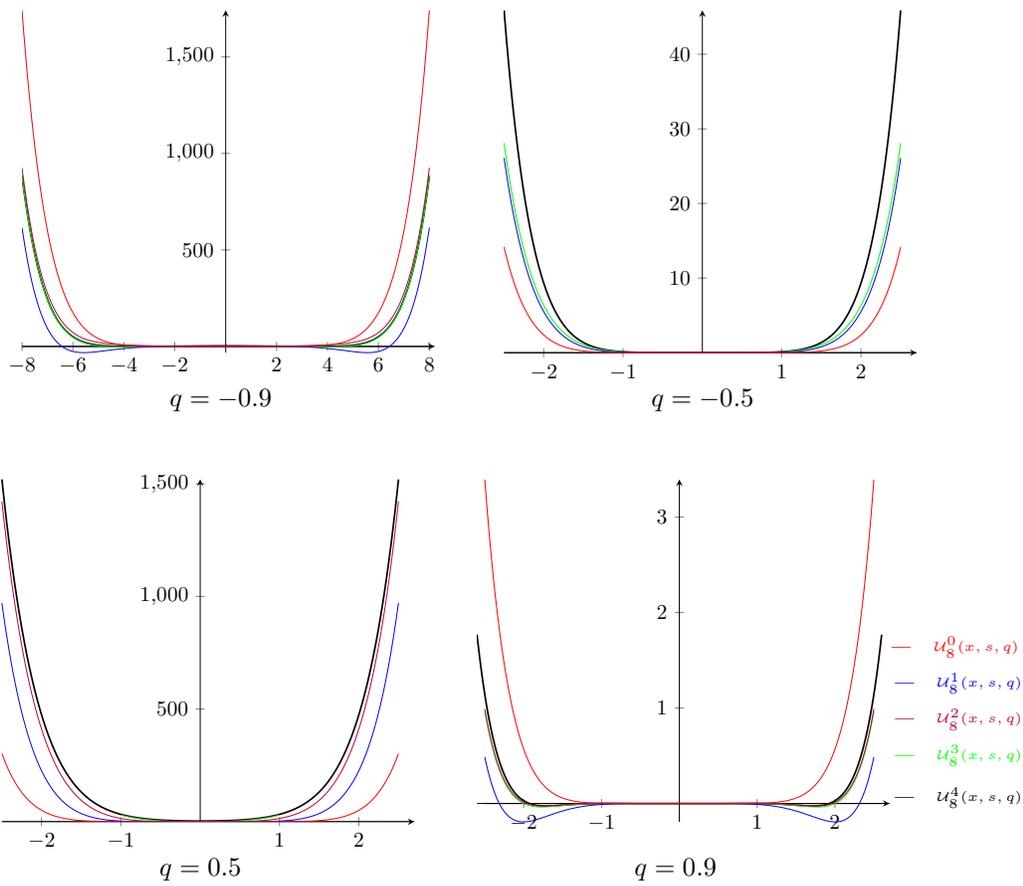

\vskip 5pt

In Figure 6, the graphs of incomplete Fibonacci numbers $\:\mathcal{U}_{n}^{k}(\frac{x}{2},1,1)$ and incomplete Lucas numbers $\mathcal{T}_{n}^{k}(\frac{x}{2},1,1)$ for $1\leq n \leq 9$  $0 \leq k \leq k$ are shown.
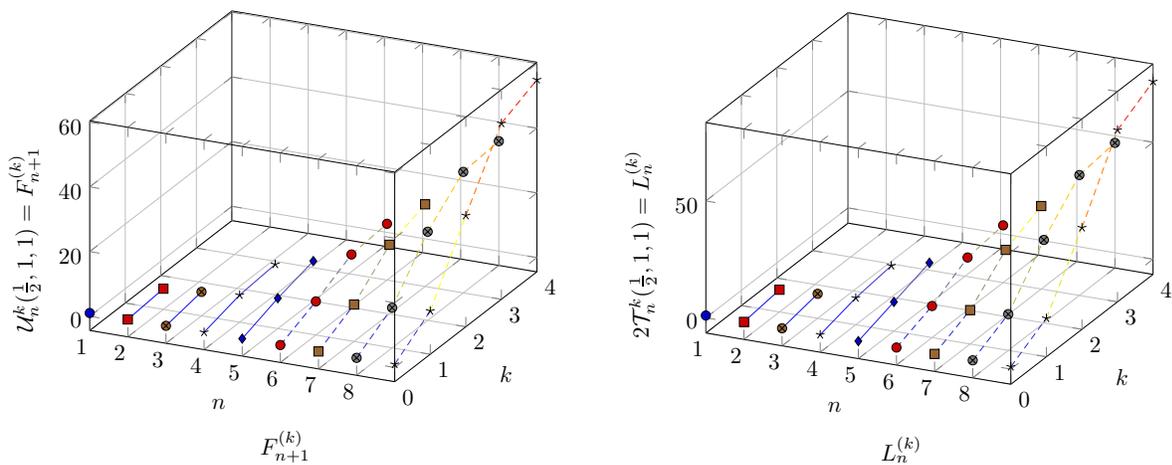
\begin{figure}[h!]
	\centering
	\resizebox {\columnwidth} {!} {
		\begin{tikzpicture}[scale=1]
		\begin{axis}[
		3d box=complete,
		grid=major,
		samples=5, domain=1:5,
		xtick={0,1,...,8},ytick={0,1,...,4},
		xlabel={$n$},
		ylabel={$k$},
		zlabel={$\:\mathcal{U}_n^k(\frac{1}{2},1,1)=F_{n+1}^{(k)}$},
		]
		\addplot3+[mesh] coordinates {(1,0,1) };
		\addplot3+[mesh] coordinates { (2,0,1) (2,1,2)  };
		\addplot3+[mesh] coordinates { (3,0,1)  (3,1,3) };
		\addplot3+[mesh] coordinates { (4,0,1)  (4,1,4) (4,2,5)};
		\addplot3+[mesh] coordinates { (5,0,1)  (5,1,5) (5,2,8)};
		\addplot3+[mesh] coordinates { (6,0,1)  (6,1,6) (6,2,12) (6,3,13) };
		\addplot3+[mesh] coordinates {  (7,0,1)  (7,1,7) (7,2,17) (7,3,21) };
		\addplot3+[mesh] coordinates {  (8,0,1)  (8,1,8) (8,2,23) (8,3,33) (8,4,34) };
		\addplot3+[mesh] coordinates {  (9,0,1)  (9,1,9) (9,2,30) (9,3,50) (9,4,55) };
		\end{axis}
		\draw[very thick] (3,-1) node {   $F_{n+1}^{(k)}$};
		\end{tikzpicture}
		\qquad
		\begin{tikzpicture}[scale=1]
		\begin{axis}[
		3d box=complete,
		grid=major,
		samples=5, domain=0:7,
		xtick={0,1,...,8},ytick={0,1,...,4},
		xlabel={$n$},
		ylabel={$k$},
		zlabel={$2\mathcal{T}_n^k(\frac{1}{2},1,1)=L_n^{(k)}$},
		]
		\addplot3+[mesh] coordinates {(1,0,1) };
		\addplot3+[mesh] coordinates { (2,0,1) (2,1,3)  };
		\addplot3+[mesh] coordinates { (3,0,1)  (3,1,4) };
		\addplot3+[mesh] coordinates { (4,0,1)  (4,1,5) (4,2,7)};
		\addplot3+[mesh] coordinates { (5,0,1)  (5,1,6) (5,2,11)};
		\addplot3+[mesh] coordinates { (6,0,1)  (6,1,7) (6,2,16) (6,3,18) };
		\addplot3+[mesh] coordinates {  (7,0,1)  (7,1,8) (7,2,22) (7,3,29) };
		\addplot3+[mesh] coordinates {  (8,0,1)  (8,1,9) (8,2,29) (8,3,45) (8,4,47) };
		\addplot3+[mesh] coordinates {  (9,0,1)  (9,1,10) (9,2,37) (9,3,67) (9,4,76) };
		\end{axis}
		\draw[very thick] (3,-1) node {  $L_n^{(k)}$};
		\end{tikzpicture}
		\hspace{1.5cm}
	}
	\vskip 5pt
	\caption{Graphs of incomplete Fibonacci ve Lucas numbers for \ $1\leq n\leq 9$, \  $0 \leq k \leq4$. }
\end{figure}

\clearpage


\end{document}